\documentclass[pdflatex,10pt]{article}
\usepackage[utf8]{inputenc}
\usepackage{mathtools}
\usepackage{amsmath,color}
\usepackage{amssymb,amsthm}
\usepackage{subfig}
\usepackage[permil]{overpic}
\usepackage[multiple]{footmisc}
\usepackage{xr}
\usepackage[left=1.6cm,right=1.6cm,top=2.50cm,bottom=2.50cm]{geometry}
\usepackage{graphicx}
\usepackage[font=small,labelfont=bf,
   justification=justified,
   format=plain,labelsep=space]{caption}
\usepackage{float}
\usepackage[greek,english]{babel}
\usepackage{teubner}
\usepackage[font=small,labelfont=bf,justification=centering]{caption}
\usepackage[T1]{fontenc}
\usepackage{lastpage}
\usepackage{enumerate}
\usepackage{enumitem}
\usepackage{lmodern}
\usepackage{array}
\usepackage{bm}
\usepackage{multicol}
\usepackage{multirow}
\usepackage{dsfont}
\usepackage{tensor}
\usepackage{fancyhdr}
\usepackage{listings}
\usepackage{dsfont}
\usepackage{siunitx}
\usepackage{titling}
\usepackage{lipsum}
\usepackage{tabularx}
\usepackage{verbatim}
\usepackage{authblk}
\usepackage{csquotes}
\usepackage{chemfig} 
\usepackage{bbm} 
\usepackage{hyperref} 
\usepackage{ctable} 
\usepackage{multirow} 
\usepackage{titlesec} 
\usepackage{marvosym}
\usepackage{relsize,exscale} 
\usepackage{setspace}
\usepackage{filecontents}

\allowdisplaybreaks



\graphicspath{{Fig/}}

\addto\captionsenglish{}

\titleformat{\subsection}{\normalfont\large\raggedright\it}{\thesubsection}{1em}{}

\makeatletter
\newcommand{\thickhline}{%
    \noalign {\ifnum 0=`}\fi \hrule height 1pt
    \futurelet \reserved@a \@xhline
}
\newcolumntype{"}{@{\hskip\tabcolsep\vrule width 1pt\hskip\tabcolsep}}
\makeatother

\usepackage{adjustbox}


\makeatletter
\newsavebox{\@brx}
\newcommand{\llangle}[1][]{\savebox{\@brx}{\(\m@th{#1\langle}\)}%
  \mathopen{\copy\@brx\kern-0.5\wd\@brx\usebox{\@brx}}}
\newcommand{\rrangle}[1][]{\savebox{\@brx}{\(\m@th{#1\rangle}\)}%
  \mathclose{\copy\@brx\kern-0.5\wd\@brx\usebox{\@brx}}}
\makeatother


\relpenalty   = 10000
\binoppenalty = 10000

\captionsetup{justification=justified,font=scriptsize,singlelinecheck=false}
\captionsetup[subfigure]{width=.4\linewidth,aboveskip=0mm,justification=centering}

\sisetup{output-exponent-marker=\textsc{e}, bracket-negative-numbers, open-bracket={\text{-}}, close-bracket={}}
\lstset{
  basicstyle=\ttfamily,
  columns=fullflexible,
  frame=single,
  breaklines=true,
  postbreak=\mbox{\textcolor{blue}{$\hookrightarrow$}\space},
}

\setlength\parindent{0pt}

\newtheorem{deff}{Definition}[section]

\newtheorem{thm}[deff]{Theorem}

\newtheorem{cor}[deff]{Corollary}
\theoremstyle{definition}

\numberwithin{equation}{section}

\usepackage{graphicx} 

\title{Critical Slowing Down in Bifurcating Stochastic Partial \\Differential Equations with Red Noise}
\author[1,$\dag$]{P. Bernuzzi}
\author[1,2]{C. Kuehn}
\author[3,4]{A. Morr}
\affil[1]{\footnotesize{Technical University of Munich, School of Computation Information and Technology, Department of
Mathematics, Boltzmannstraße 3, 85748 Garching, Germany}}
\affil[2]{\footnotesize{Technical University of Munich, Munich Data Science Institute, Walther-von-Dyck-Straße 10, 85748 Garching, Germany}}
\affil[3]{\footnotesize{Technical University of Munich, School of Engineering and Design, Lise-Meitner Str. 9, 85521 Ottobrunn, Germany}}
\affil[4]{\footnotesize{Potsdam Institute for Climate Impact Research, Research Domain IV: Complexity Science, Telegrafenberg A 31, 14473 Potsdam, Germany}}
\affil[$\dag$]{Author to whom any correspondence should be addressed.\smallskip 

Email addresses: paolo.bernuzzi@ma.tum.de (P. Bernuzzi), ckuehn@ma.tum.de (C. Kuehn), andreas.morr@tum.de (A. Morr).}
\date{\today}

\begin{document}

\maketitle


\begin{abstract}
The phenomenon of critical slowing down (CSD) has played a key role in the search for reliable precursors of catastrophic regime shifts. This is caused by its presence in a generic class of bifurcating dynamical systems. Simple time-series statistics such as variance or autocorrelation can be taken as proxies for the phenomenon, making their increase a useful early warning signal (EWS) for catastrophic regime shifts. However, the modelling basis justifying the use of these EWSs is usually a finite-dimensional stochastic ordinary differential equation, where a mathematical proof for the aptness is possible. Only recently has the phenomenon of CSD been proven to exist in infinite-dimensional stochastic partial differential equations (SPDEs), which are more appropriate to model real-world spatial systems. In this context, we provide an essential extension of the results for SPDEs under a specific noise forcing, often referred to as red noise. This type of time-correlated noise is omnipresent in many physical systems, such as climate and ecology. We approach the question with a mathematical proof and a numerical analysis for the linearised problem. We find that also under red noise forcing, the aptness of EWSs persists, supporting their employment in a wide range of applications. However, we also find that false or muted warnings are possible if the noise correlations are non-stationary. We thereby extend a previously known complication with respect to red noise and EWSs from finite-dimensional dynamics to the more complex and realistic setting of SPDEs.
\end{abstract}

\vspace{2em}
\small{\textbf{Keywords:} SPDEs, Early-warning signs, Critical Slowing Down, Red noise, Bifurcations.}

\small{\textbf{MSC codes:} 
60H15   

\small{\textbf{Funding:} This work was supported by the European Union’s Horizon 2020 research and innovation programme under Grant Agreement 956170.
This is ClimTip contribution \#. The ClimTip project has received funding from the European Union's Horizon Europe research and innovation programme under grant agreement No.~101137601. Views and opinions expressed are however those of the author(s) only and do not necessarily reflect those of the European Union or the European Climate, Infrastructure, and Environment Executive Agency (CINEA). Neither the European Union nor the granting authority can be held responsible for them.}}

\pagestyle{fancy}
\fancyhead{}
\renewcommand{\headrulewidth}{0pt}
\fancyhead[C]{\textit{Early Warning Signs for SPDEs with Red Noise}}

\section{Introduction}
Identifying and assessing high-impact events, such as climate tipping points or ecological catastrophes, has gained tremendous importance recently~\cite{Lenton2008, ArmstrongMcKay2022TippingPoints}. The non-linear, potentially abrupt response of complex systems to anthropogenic changes poses a grave challenge for conventional modelling techniques \cite{Valdes2011BuiltStability}. This has motivated the use of conceptual models, wherein the complex, high-dimensional dynamics are reduced to a few variables and their characterising feedback mechanisms. Often, a noise model is added to replace some of the omitted complexity \cite{Melbourne2011ChaosToNoise, Zwanzig2001Formalism}. In such conceptual models, the nature of abrupt regime shifts can be formalized as dynamical bifurcations, that is, a vanishing of a previously stable equilibrium state \cite{Alkhayuon2019AMOCRateTipping, Boers2017DeforAmaz}. 

Using the effect of noise, there is a particular phenomenon to be observed before these bifurcations. This phenomenon is often called critical slowing down (CSD) \cite{Kuehn2011CSD} since it describes the weakening response of the destabilizing system to noise disturbances. CSD has been taken as the basis for many statistical early warning signals (EWSs) in time series data. An increase in variance is often interpreted as an ongoing destabilization and an impending system collapse \cite{Boers2018DOEWS, Brett2020ROCEWS}. However, the exact behaviour of the system under noise close to the bifurcation depends on the type of noise \cite{Kuehn2022ColourBlind, Morr2022RedNoise, Benson2024alphaStableCSD}. One specific noise model, referred to as red noise, can represent time 
correlation in the dynamics and is, therefore, suitable for a large range of applications \cite{Hasselmann1976Theory1, Vaughan2005RedNoiseAstro, Santos2019RedNoiseElectronics}. In the context of CSD, it can be shown that an increase in variance precedes a bifurcation, but only under the assumption of stationary noise \cite{Morr2022RedNoise}. If the correlation time of the noise is itself subject to change, the EWS may be false or muted \cite{Clarke2023ROSA}.

An additional challenge to the validity of EWSs is the simplicity of the employed low-dimensional conceptual model \cite{Ben-Yami2024TippingTime, Lohmann2024AMOCMultistability}. In particular, it is often impossible to rigorously derive a low-dimensional model from the real-world system, which will generally be a spatial and infinite-dimensional dynamical system. A system of partial differential equations (PDEs) can more accurately describe these dynamics \cite{Dijkstra1997AMOCPDE}. However, even then, a stochastic noise term may be needed to replace fast time scale dynamics beyond the model's resolution\cite{Baars2017SPDEBoundary, Pal2022TippingSpatialRedNoise}. Such stochastic partial differential equations (SPDEs) have only recently been investigated in the context of EWSs for bifurcations \cite{bernuzzi2023bifurcations,Bernuzzi2024EWSSPDEContinuousSpectrum, Bernuzzi2024EWSSPDEBoundary, Gowda2015EWSSPDE, Romano2019ScalLawsAndEWSs}. These works supply a mathematical basis for the use of EWSs in a considerably larger range of dynamical systems. The investigated noise model is the canonical white noise model applied to the domain and boundary dynamics, respectively.

In this work, we advance the previous findings on the applicability of variance and autocorrelation-based EWSs. As discussed, this has already been achieved for low-dimensional models with red noise and for SPDEs with white noise. Here, we investigate SPDEs under the influence of red noise. This step is essential in assuring the aptness of the CSD and EWS framework in real-world systems. This is because real-world systems will generally be infinite-dimensional, and their chaotic, fast-time scale dynamics will typically exhibit time correlation \cite{Hasselmann1976Theory1, Zwanzig1961Memory, Vasseur2004ColorEnvironmentalNoise}. We investigate two types of red noise influences on PDEs that are losing stability. These are (i) additive noise in the dynamical equations and (ii) linear boundary additive noise. We analytically examine linearised SPDEs corresponding to non-linear bifurcating SPDEs \cite{bernuzzi2023bifurcations, Bernuzzi2024EWSSPDEBoundary, Gowda2015EWSSPDE}. The characteristics of such systems are relevant to real-world systems because the noise is assumed to be small, and the dynamics evolve close to equilibrium, where linear first-order behaviour dominates. In numerical experiments, we then confirm the predicted behaviour.

Our findings indicate that several SPDE components need to be considered to properly understand the occurring destabilisation. The nature of the noise, i.e., the regions and the modes it affects, has complex implications for the solution. However, with respect to CSD, these implications turn out to be negligible in a certain sense: a generic observable of the system will exhibit an increase in variance before the bifurcation. At the same time, a generic observable of a system without a bifurcation but with increasing noise correlation time will exhibit false EWSs. Such indications could be misinterpreted as an approaching bifurcation. When considering the structure of the deterministic PDE, the spectrum of the linearised equation plays a two-fold role: it characterises the linear stability of the studied equilibrium and small gaps between the eigenvalues delay EWSs as the critical transition is approached. This effect is worse for the limit of a purely continuous spectrum, discussed further below. Conversely, the signals can be highly affected by enhanced perturbations along specific modes generated by the interplay between additive noise on the component itself and noise mediated by other generalized eigenfunctions in the same generalized eigenspace.

The paper is structured as follows. We introduce standard mathematical tools employed in the construction of the EWSs in Section \ref{sec: Prel}. In Section \ref{sec: MathResults}, we prove fundamental statements about the limit behaviour of system variance in linear SPDEs losing stability and driven by red noise. Such a stability loss is a generic feature of dynamical bifurcations in non-linear systems. Both the case of stability loss under stationary noise conditions and under non-stationary noise conditions are investigated. We then confirm these findings in numerical models of practical relevance in Section \ref{sec: NumResults}. Finally, we discuss the results in the context of ongoing research on EWSs for bifurcations, such as climate tipping points.

\section{Preliminaries}\label{sec: Prel}
The following framework allows for the modelling of space-time dynamical systems under the influence of stochastic perturbations. More concretely, we analyse the time evolution of a physical quantity $u$ defined on a space domain under the influence of so-called red noise. We set the space domain $\mathcal{X}_1\subset\mathbb{R}^N$ and the Hilbert space $\mathcal{H}_1:=L^2(\mathcal{X}_1)$ of possible solutions $u(\cdot,t)\in \mathcal{H}_1$ for any $t>0$. We indicate the scalar product of this solution space as $\left\langle \cdot, \cdot \right\rangle$. In contrast, we refer to the scalar product of any other Hilbert space $\tilde{\mathcal{H}}$ as $\left\langle \cdot, \cdot \right\rangle_{\tilde{\mathcal{H}}}$. The scalar product with respect to specific probing functions will play a central role in defining the concept of system variance, a potential EWS of bifurcations. We focus on SPDEs on $\mathcal{X}_1$ with red noise or boundary red noise. As such, we define the boundary $\mathcal{X}_0:=\partial\mathcal{X}_1$ of $\mathcal{X}_1$, and we label $\mathcal{H}_0:=L^2(\mathcal{X}_0)$. For $\kappa > 0$, $\sigma > 0$ and $j\in\{0,1\}$, we define the Ornstein-Uhlenbeck process $\xi_j=\xi_j(x,t)$ that solves
\begin{align} \label{eq:syst_xi}
    \text{d}\xi_j(x,t) &= -\kappa \xi_j(x,t) \text{d}t + \sigma Q_j^\frac{1}{2} \text{d}W_t^j ,
\end{align}
for any $x\in\mathcal{X}_j$ and $t>0$. Such a process is often referred to in the literature as red noise \cite{Hanggi1994ColoredNoiseDynamicalSys, Hanggi1993RedNoiseResonance, Miguel1981MultiplicativeRedNoiseEst, Liu2023RedNoiseOcean, Newman1997RedNoiseExtratropicalFlow}. This is because its power spectral density is weighted most heavily in the low (red) frequencies. The constant $\kappa>0$ controls the characteristic correlation time $1/\kappa$ of the noise. The smaller $\kappa$, the longer it takes for correlations in the noise to decay. For $j\in\{0,1\}$, i.e., either boundary or domain noise, the noise term is composed as follows: $Q_j$ is a positive self-adjoint operator in $\mathcal{H}_j$ with real eigenvalues $\left\{q_i\right\}_{i\in\mathbb{N}_{>0}}$ that are bounded from below by $c>0$ and corresponding eigenbasis $\left\{b_i\right\}_{i\in\mathbb{N}_{>0}}$ of $\mathcal{H}_j$. $Q_j$ can be thought of as attributing varying finite noise amplitudes to all modes $b_i$ on the domain of interest. The stochastic part itself is the cylindrical Wiener process $W_t^j$, which can be written as
\begin{align*}
    W_t^j
    = W^j(x,t)
    = \sum_{n=1}^\infty b_n^j(x) \beta_n(t) ,
\end{align*}
for the family of independent scalar Wiener processes $\left\{\beta_n\right\}_{n\in\mathbb{N}_{>0}}$. Its differential is then to be interpreted as
\begin{align*}
    \text{d} W_t^j
    = \sum_{n=1}^\infty b_n^j(x) \text{d} \beta_n(t) .
\end{align*}
The noise is induced in $\xi_j$ by a $Q_j$-Wiener process \cite{da2014stochastic}. Consequently, for every fixed $x\in\mathcal{X}_j$, $\xi_j(x,t)$ is a regular one-dimensional Ornstein-Uhlenbeck process. The interdependencies of processes $\xi_j(x_1,t)$ and $\xi_j(x_2,t)$ is determined by $Q_j$.
\smallskip

We study three SPDEs and their respective associated mild solution $u=u(x,t)$. A mild solution solves the integral form \cite[Theorem 5.4]{da2014stochastic} corresponding to the SPDE for a specific choice of probability space. The existence and uniqueness of the solution for each case are discussed in \cite{Bernuzzi2024EWSSPDEBoundary, Bernuzzi2024EWSSPDEContinuousSpectrum}. The processes $\xi_0$ and $\xi_1$ are considered to be perturbations in the system that define $u$. Their intensity is indicated by $\sigma_{\text{R}}>0$.
\begin{enumerate}
    \item[(a)] First, we consider $u^{\text{d}}=u^{\text{d}}(x,t)$ that solves the following SPDE with domain noise $\xi_1$,
    \begin{align} \label{eq:case_1}
        \left\{ \begin{alignedat}{2}
            \text{d}u^{\text{d}}(x,t) &= \left( A_0(p) u^{\text{d}}(x,t) + \sigma_{\text{R}} \xi_1(x,t) \right) \text{d}t , \\
            u^{\text{d}}(x,0) &= u_0(x) \in \mathcal{H}_1 ,
        \end{alignedat} \right.
    \end{align}
    for any $x\in\mathcal{X}_1$ and $t>0$. The purely-discrete, linear operator $A_0(p)$ is assumed to be negative for $p<0$ with eigenvalues 
    \begin{align*}
        0
        >\operatorname{Re} \left( \lambda^{(p)}_1 \right) 
        >\operatorname{Re} \left( \lambda^{(p)}_2 \right) 
        \geq \operatorname{Re} \left( \lambda^{(p)}_3 \right) 
        \geq \dots ,
    \end{align*}
    which are assumed to be continuous in $p$. This is a free parameter that describes exogenous changes to the system. Equation \ref{eq:case_1} constitutes a deterministically linearly stable system. A loss of linear stability occurs whenever at least one eigenvalue crosses the imaginary axis. This is a generic occurrence in bifurcating dynamical systems. We assume that the eigenvalue $\lambda^{(p)}_1$ is the only one to reach the imaginary axis at $p=0$. For simplicity throughout the paper, we consider that $-\kappa$ is not in the spectrum of $A_0(p)$ for any small $p\leq 0$ and fixed $\kappa>0$. We denote as $\operatorname{R}$ the resolvent of an invertible operator. That is $\operatorname{R}(\lambda, B)=(B-\lambda)^{-1}$, for $\lambda\in\mathbb{C}$ that is not in the spectrum of the operator $B$. For any $p\leq 0$, we denote by $A_0(p)^*$ the adjoint operator of $A_0(p)$ with respect to the scalar product on $\mathcal{H}_1$. We write $\overline{z}$ for the conjugate of $z\in\mathbb{C}$. The operators $A_0(p)$ and $A_0(p)^*$ are assumed to be closed and densely defined in $\mathcal{H}_1$. We use $m_a$ to indicate the algebraic multiplicity of an eigenvalue. For simplicity, their geometric multiplicity is set to $1$. For any $i\in\mathbb{N}_{>0}$, $p\leq 0$ and $k\in\left\{1,\dots,m_a\left(\lambda_i^{(p)}\right)\right\}$, the generalized eigenfunctions of $A_0(p)$ and $A_0(p)^*$  corresponding respectively to $\lambda_i^{(p)}$ and $\overline{\lambda_i^{(p)}}$ are labeled as $e_{i,k}^{(p)}$ and ${e_{i,k}^{(p)}}^*$ and satisfy the Jordan block structure
    \begin{align} \label{eq:def_gen_eig}
    \begin{alignedat}{3}
        & A_0(p) e_{i,k}^{(p)} = \lambda_i^{(p)} e_{i,k}^{(p)},
        && \quad A_0(p)^* {e_{i,k}^{(p)}}^* = \overline{\lambda_i^{(p)}} {e_{i,k}^{(p)}}^*,
        &&\quad \text{for $k=1$},\\
        & A_0(p) e_{i,k}^{(p)} = \lambda_i^{(p)} e_{i,k}^{(p)} + e_{i,k-1}^{(p)},
        && \quad A_0(p)^* {e_{i,k}^{(p)}}^* = \overline{\lambda_i^{(p)}} {e_{i,k}^{(p)}}^* + {e_{i,k-1}^{(p)}}^*,
        &&\quad \text{for $k \neq 1$}.
    \end{alignedat}
    \end{align}
    We assume that such functions are continuous in $\mathcal{H}_1$ with regard to $p$. The deterministically invariant subspaces generated by the generalized eigenfunctions of $A_0(p)$ and $A_0(p)^*$ associated to the eigenvalue $\lambda_i^{(p)}$ and $\overline{\lambda_i^{(p)}}$ are denoted respectively as $E_i(p)$ and $E_i(p)^*$. Their dimension is labeled as $M_i=m_a\left(\lambda_i^{(p)}\right)$ and is assumed to be independent of $p$. For each $i\in\mathbb{N}_{>0}$, the sets $\left\{e_{i,k}^{(p)}\right\}_{k\in\left\{1,\dots,M_i\right\}}$ and $\left\{{e_{i,M_i-k+1}^{(p)}}^*\right\}_{k\in\left\{1,\dots,M_i\right\}}$ are scaled to form a biorthogonal system. Each family is assumed to be complete \cite{Bernuzzi2024EWSSPDEBoundary,zhang2001completeness} in $\mathcal{H}_1$. For $i\in\mathbb{N}_{>0}$, we label $e_{i}^{(p)} = e_{i,1}^{(p)}$ and ${e_{i}^{(p)}}^* = {e_{i,1}^{(p)}}^*$ if $M_i=1$. Lastly, we set $e_{i,0}^{(p)} = {e_{i,0}^{(p)}}^* \equiv 0$ for all $i\in \mathbb{N}_{>0}$.
    Such a definition of generalized eigenfunctions of $A_0(p)^*$ implies the construction of the functions
        \begin{align*}                      \operatorname{R}\left(A_0(p)^*+\kappa\right) {e_{i,k}^{(p)}}^* = 
            - 
            \sum_{j=1}^{k} \left(-\overline{\lambda_i^{(p)}}-\kappa\right)^{-k+j-1} {e_{i,j}^{(p)}}^*=:\mu_{i,k}^{(p,\kappa)}
        \end{align*}
    for any $i\in\mathbb{N}_{>0}$ and $k\in\{1,\dots,M_i\}$, which are used in the theorems to follow.

    To summarize, the function $u^d$ solves a linear equation under the influence of red noise that is added at every point $x$ within the space domain $\mathcal{X}_1$. The linear spectrum is assumed to be discrete so that we can find a set of basis functions (modes) $\left\{{e_{i,k}^{(p)}}^*\right\}$ of $\mathcal{H}_1$ that separates neatly into generalized eigenspaces associated with the discrete eigenvalues. The eigenspace $E_1(p)^*$ is of particular interest since it is associated with the critical eigenvalue $\lambda^{(p)}_1$, which will cross the imaginary axis, e.g., during a bifurcation. These modes will experience the most direct system destabilization.
    \smallskip
    
    \item[(b)] As a second case, we consider $u^{\text{c}}=u^{\text{c}}(x,t)$ to solve
    \begin{align} \label{eq:case_2}
        \left\{ \begin{alignedat}{2}
            \text{d}u^{\text{c}}(x,t) &= \left( f(x,p) u^{\text{c}}(x,t) + \sigma_{\text{R}} \xi_1(x,t) \right) \text{d}t , \\
            u^{\text{c}}(0,x) &= u_0(x) \in \mathcal{H}_1 ,
        \end{alignedat} \right.
    \end{align}
    for $x\in\mathcal{X}_1$ and $t>0$. The function $f:\mathcal{X}_1\times\mathbb{R}_{<0}\to\mathbb{R}_{<0}$ is assumed to be analytic. For a fixed $x_\ast$, the function satisfies
    \begin{align*}
        f(x,p) < 0 \quad \text{and} \quad f(x_\ast,0) = 0 ,
    \end{align*}
    for any $(x,p)\in\mathcal{X}_1\times\mathbb{R}_{\leq0}\setminus\{(x_\ast,0)\}$. Similarly to the previous case, we consider for simplicity values of $p<0<\kappa$ such that 
    \begin{align*}
        f(x,p)+\kappa \neq 0
    \end{align*}
    for any $x\in\mathcal{X}_1$. In contrast, the operator $f$ can have a continuous spectrum. The implications of this on the presence of EWS is studied in Section \ref{sec: MathResults}.
    \smallskip
    
    \item[(c)] Lastly, we observe the solution $u^{\text{b}}=u^{\text{b}}(x,t)$ of
    \begin{align} \label{eq:case_3}
        \left\{ \begin{alignedat}{2}
            \text{d}u^{\text{b}}(x,t) &= A(p) u^{\text{b}}(x,t) \text{d}t , \\
            u^{\text{b}}(0,x) &= u_0(x) \in \mathcal{H}_1 ,
        \end{alignedat} \right.
    \end{align}
    for $x\in\mathcal{X}_1$ and
    \begin{align*}
        \gamma(p) u^{\text{b}}(x,t) &= \sigma_{\text{R}} \xi_0(x,t) ,
    \end{align*}
    on the boundary $x\in\mathcal{X}_0$ and $t>0$. The deterministic part of \eqref{eq:case_3} has the same properties as that of the first considered case. In this case, we are investigating the effect of setting noise on the boundary of the space domain. The linear operator
    \begin{align*}
        \gamma(p):\mathcal{D}\left( \gamma(p) \right) \subseteq \mathcal{H}_1 \to \mathcal{H}_0
    \end{align*}
    defines the boundary conditions. Furthermore, we assume that, for fixed $p\leq 0$, there exists a continuous $q=q(p)\in\mathbb{R}$ such that for any boundary value problem
    \begin{equation*}
        (A(p)- q)w=0 \quad , \quad \gamma(p)\; w=v\;,
    \end{equation*}
    with $v\in \mathcal{H}_0$, there exists a unique solution $w=D(p)v\in\mathcal{D}(A(p))\subseteq \mathcal{H}_1$. For any $p\leq 0$, we indicate $D(p)^*$ as the adjoint operator of $D(p)$ with respect to the scalar products on the Hilbert spaces $\mathcal{H}_1$ and $\mathcal{H}_0$. We assume the operator $D(p)^*$ to be uniformly bounded in $L^2(\mathcal{H}_1;\mathcal{H}_0)$ 
    for any $p\leq 0$. Setting 
    \begin{align*}
        A_0(p) v = A(p) v
    \end{align*}
    for any $v\in\mathcal{H}_1$ such that $\gamma(p) v = 0$, we assume $A_0$ to satisfy the properties described above in $(a)$. Moreover, we consider values of $q$ that are not in the spectrum of $A_0(p)$ and  $A_0(p)^*$ for $p$ close to $0$. We obtain then that 
    \begin{align} \label{eq:def_Lambda}
        \Lambda(p):=\left(A_0(p)- q\right) D(p)
        Q_0
        D(p)^* \left(A_0(p)^*- q\right) 
    \end{align}
    depends on operator $\gamma$. The mild solution of the last system is defined \cite{da1993evolution} in the form
    \begin{align*}
        u^{\text{b}}(x,t) = \text{e}^{A_0(p) t} u_0(x) + \sigma \sigma_{\text{R}} \int_0^t \text{e}^{A_0(p) (t-s)} \left( A_0(p) - q \right) D(p) \int_0^s \text{e}^{-\kappa (s-r)} Q_0^\frac{1}{2} \text{d}W^0_r \; \text{d}s .
    \end{align*}
\end{enumerate}
Following \cite{bernuzzi2023bifurcations,Gowda2015EWSSPDE,Romano2019ScalLawsAndEWSs}, we aim to construct early-warning signs to the approaches $p\to 0^-$ and $\kappa\to 0^+$, respectively. On such thresholds, the dissipativity in the linear system that defines $u$ and $\xi$ is lost, and the origin in $\mathcal{H}_1\times\mathcal{H}_1$ is not a stable deterministic equilibrium. However, the two limit cases have very different physical interpretations. While the $p\to 0^-$ limit is a stand-in for dynamical bifurcations of non-linear systems, $\kappa\to 0^+$ represents a change in the characteristics of the driving noise. In the context of, e.g., climate tipping points, only the former limit would be of interest. We hope to find EWS in the system variance with respect to different probing functions. If such a probing function is composed of a destabilising mode, we would conventionally expect variance to increase. We define the covariance as $\text{Cov}$. In case $(a)$ and $(b)$, we set the linear variance operator $V_t:\mathcal{H}_1\times \mathcal{H}_1 \to \mathcal{H}_1\times \mathcal{H}_1$, such that
\begin{align*}
    \left\langle \begin{pmatrix}
        v_1 \\ v_2
    \end{pmatrix} , V_t \begin{pmatrix}
        w_1 \\ w_2
    \end{pmatrix} \right\rangle_{\mathcal{H}_1\times\mathcal{H}_1}
    = \text{Cov} \left( \left\langle u(\cdot,t), v_1 \right\rangle
    + \left\langle \xi_1(\cdot,t), v_2 \right\rangle, 
    \left\langle u(\cdot,t), w_1 \right\rangle 
    + \left\langle \xi_1(\cdot,t), w_2 \right\rangle \right) ,
\end{align*}
for $v_1,v_2,w_1,w_2\in\mathcal{H}_1$. Since we are solely interested in the variance on variable $u$, we will set $v_2=w_2\equiv 0$. Furthermore, in case $(c)$, the boundary noise requires the different definition $V_t^{\text{b}}:\mathcal{H}_1\times \mathcal{H}_0 \to \mathcal{H}_1\times \mathcal{H}_0$ and
\begin{align*}
    \left\langle \begin{pmatrix}
        v_1 \\ v_2
    \end{pmatrix} , V_t^{\text{b}} \begin{pmatrix}
        w_1 \\ w_2
    \end{pmatrix} \right\rangle_{\mathcal{H}_1\times\mathcal{H}_0}
    = \text{Cov} \left( \left\langle u(\cdot,t), v_1 \right\rangle
    + \left\langle \xi_0(\cdot,t), v_2 \right\rangle_{\mathcal{H}_0}, 
    \left\langle u(\cdot,t), w_1 \right\rangle 
    + \left\langle \xi_0(\cdot,t), w_2 \right\rangle_{\mathcal{H}_0} \right) ,
\end{align*}
for $v_1,w_1\in\mathcal{H}_1$ and $v_2,w_2\in\mathcal{H}_0$. We construct the time-asymptotic variance operator as $V_\infty = \underset{t\to \infty}{\text{lim}} V_t$ and $V_\infty^{\text{b}} = \underset{t\to \infty}{\text{lim}} V_t^{\text{b}}$. Such an observable is employed in the next section to construct EWSs. These are defined as its rate of divergence in the limits $p\to 0^-$ and $\kappa\to 0^+$. As such, we refer to the Landau notation \cite{Bernuzzi2024EWSSPDEContinuousSpectrum}
\begin{align*}
    r_1(p,\kappa) = \Theta_p \left( r_2(p,\kappa) \right) 
    \quad \Longleftrightarrow \quad
    \underset{p \to 0^-}{\text{lim}} \frac{r_1(p,\kappa)}{r_2(p,\kappa)} \in (0,+\infty), \quad
    &r_1(p,\kappa) = \mathcal{O}_p \left( r_2(p,\kappa) \right) 
    \quad \Longleftrightarrow \quad
    \underset{p \to 0^-}{\text{lim}} \frac{r_1(p,\kappa)}{r_2(p,\kappa)} \in [0,+\infty), \\
    r_1(p,\kappa) = \Theta_\kappa \left( r_2(p,\kappa) \right) 
    \quad \Longleftrightarrow \quad
    \underset{\kappa \to 0^+}{\text{lim}} \frac{r_1(p,\kappa)}{r_2(p,\kappa)} \in (0,+\infty) , \quad
    &r_1(p,\kappa) = \mathcal{O}_\kappa \left( r_2(p,\kappa) \right) 
    \quad \Longleftrightarrow \quad
    \underset{\kappa \to 0^+}{\text{lim}} \frac{r_1(p,\kappa)}{r_2(p,\kappa)} \in [0,+\infty) , \\
    r_1(p,\kappa) = \Theta \left( r_2(p,\kappa) \right) 
    \quad \Longleftrightarrow \quad
    \underset{(p,\kappa) \to (0,0)}{\text{lim}} \frac{r_1(p,\kappa)}{r_2(p,\kappa)} \in (0,+\infty) , \quad
    &r_1(p,\kappa) = \mathcal{O} \left( r_2(p,\kappa) \right) 
    \quad \Longleftrightarrow \quad
    \underset{(p,\kappa) \to (0,0)}{\text{lim}} \frac{r_1(p,\kappa)}{r_2(p,\kappa)} \in [0,+\infty) , 
\end{align*}
for any pair of locally continuous functions $r_1: \mathbb{R}_{<0}\times\mathbb{R}_{>0} \to \mathbb{R}_{>0}$ and $r_2: \mathbb{R}_{<0}\times\mathbb{R}_{>0} \to \mathbb{R}_{>0}$. In essence, the $\Theta$ equivalence is a stronger asymptotic characteristic than the standard $\mathcal{O}$ equivalence, since it implies boundedness of the ratio and its inverse.

\section{Main Results}\label{sec: MathResults}

In this section, we prove the scaling law of the time-asymptotic variance of the mild solutions associated with the SPDEs \eqref{eq:case_1}, \eqref{eq:case_2} and \eqref{eq:case_3}. This is considered in the limits $p\to 0^-$ and $\kappa\to 0^+$, where the dissipativity of the models is lost. In the case of $p\to 0^-$, we thus discover an EWS of linear stability loss. The scaling w.r.t.~$\kappa\to 0^+$ on the other hand should be considered a false EWS, since no genuine destabilisation of the deterministic dynamics took place.

\subsection{Discrete Spectrum}

We first consider $u^{\text{d}}=u^{\text{d}}(x,t)$ that solves \eqref{eq:case_1} and would like to make statements about the variance. The linear drift term in the system has a purely discrete spectrum. As a result, the scaling law of the time-asymptotic variance, i.e., the rate of its convergence or divergence, depends on the functions along which it is observed. The following theorem indicates these sensible modes and the corresponding asymptotics.

\begin{thm} \label{thm:1}
We consider $u^{\text{d}}=u^{\text{d}}(x,t)$ that solves
    \begin{align*}
        \left\{ \begin{alignedat}{2}
            \text{d}u^{\text{d}}(x,t) &= \left( A_0(p) u^{\text{d}}(x,t) + \sigma_{\text{R}} \xi_1(x,t) \right) \text{d}t , \\
            \text{d}\xi_1(x,t) &= -\kappa \xi_1(x,t) + \sigma Q_1^\frac{1}{2} \text{d}W_t^1 ,
        \end{alignedat} \right.
    \end{align*}
with initial conditions in $\mathcal{H}_1$, $x\in\mathcal{X}_1$, $p<0$ and $t>0$. Then, the scaling laws
    \begin{align*}
        \left\lvert \left\langle \begin{pmatrix}
            {e_{i_1,k_1}^{(p)}}^* \\ 0
        \end{pmatrix}, V_\infty \begin{pmatrix}
            {e_{i_2,k_2}^{(p)}}^* \\ 0
        \end{pmatrix} \right\rangle_{\mathcal{H}_1\times \mathcal{H}_1} \right\rvert
        &= \Theta_\kappa \left( \kappa^{-1} \right) 
        \quad \text{for any } \quad p<0
    \end{align*}
and
    \begin{align*}
        \left\lvert \left\langle \begin{pmatrix}
            {e_{i_1,k_1}^{(p)}}^* \\ 0
        \end{pmatrix}, V_\infty \begin{pmatrix}
            {e_{i_2,k_2}^{(p)}}^* \\ 0
        \end{pmatrix} \right\rangle_{\mathcal{H}_1\times \mathcal{H}_1} \right\rvert
        &= \Theta_p \left( \left\lvert \overline{\lambda_{i_1}^{(p)}}+\lambda_{i_2}^{(p)} \right\rvert^{-(k_1+k_2-1)} \right)
        \quad \text{for any} \quad \kappa>0
    \end{align*}
hold for any $i_1,i_2\in\mathbb{N}_{>0}$, $k_1\in\{1,\dots,M_{i_1}\}$ and $k_2\in\{1,\dots,M_{i_2}\}$.
\end{thm}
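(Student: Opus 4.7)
The strategy is to solve the coupled linear system for $(u^{\text{d}}, \xi_1)$ in its mild form, project onto the adjoint generalized eigenfunctions ${e_{i,k}^{(p)}}^*$, and apply the Itô isometry to obtain an explicit integral formula for the covariance whose scaling can be read off directly. I would first substitute the mild solution of $\xi_1$ into that of $u^{\text{d}}$, interchange the order of integration, and use the resolvent identity
\begin{align*}
\int_r^t \text{e}^{A_0(p)(t-s)}\text{e}^{-\kappa(s-r)}\,\text{d}s = (A_0(p)+\kappa)^{-1}\bigl(\text{e}^{A_0(p)(t-r)} - \text{e}^{-\kappa(t-r)}\bigr),
\end{align*}
which is well-defined since $-\kappa$ lies outside the spectrum of $A_0(p)$. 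This rewrites the stochastic part of $u^{\text{d}}(t)$ as a single Itô integral with respect to $W^1$.

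Projecting onto ${e_{i,k}^{(p)}}^*$ and moving $(A_0(p)+\kappa)^{-1}$ to the adjoint side produces $(A_0(p)^*+\kappa)^{-1}{e_{i,k}^{(p)}}^* = \mu_{i,k}^{(p,\kappa)}$. Setting $K_{i,k}(\tau) := (\text{e}^{A_0(p)^*\tau} - \text{e}^{-\kappa\tau})\mu_{i,k}^{(p,\kappa)}$, the Itô isometry then yields
\begin{align*}
\text{Cov}\bigl(\langle u^{\text{d}}(t), {e_{i_1,k_1}^{(p)}}^*\rangle, \langle u^{\text{d}}(t), {e_{i_2,k_2}^{(p)}}^*\rangle\bigr) = \sigma^2 \sigma_{\text{R}}^2 \int_0^t \langle Q_1 K_{i_1,k_1}(t-r), K_{i_2,k_2}(t-r)\rangle\,\text{d}r,
\end{align*}
and the integrand decays exponentially in $t-r$, so the $t\to\infty$ limit exists. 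Expanding each $\text{e}^{A_0(p)^*\tau}{e_{i,j}^{(p)}}^*$ via the Jordan relation $\text{e}^{\bar\lambda_i^{(p)}\tau}\sum_{l=1}^{j}\tfrac{\tau^{j-l}}{(j-l)!}{e_{i,l}^{(p)}}^*$ together with the explicit expansion of $\mu_{i,k}^{(p,\kappa)}$ as a sum of ${e_{i,j}^{(p)}}^*$ reduces the limiting integrand to a finite sum of terms of the form $c(p,\kappa)\,\tau^n\text{e}^{\alpha\tau}$ with $\alpha\in\{\bar\lambda_{i_1}^{(p)}+\lambda_{i_2}^{(p)},\,\bar\lambda_{i_1}^{(p)}-\kappa,\,\lambda_{i_2}^{(p)}-\kappa,\,-2\kappa\}$, each integrable to $n!\,(-\alpha)^{-(n+1)}$.

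The two scaling laws then follow by isolating the dominant term in each limit. For $\kappa\to 0^+$ at fixed $p<0$, only $\alpha=-2\kappa$ yields a divergent integral, contributing $(2\kappa)^{-1}\langle Q_1\mu_{i_1,k_1}^{(p,0)},\mu_{i_2,k_2}^{(p,0)}\rangle$ at leading order while all other terms remain bounded, which gives the $\Theta_\kappa(\kappa^{-1})$ scaling. For $p\to 0^-$ at fixed $\kappa>0$, the $\mu$-vectors and $\kappa$-dependent terms remain bounded, and the dominant contribution arises from $\alpha=\bar\lambda_{i_1}^{(p)}+\lambda_{i_2}^{(p)}$ paired with the maximal polynomial degree $n=k_1+k_2-2$ produced by the Jordan expansion, giving $(k_1+k_2-2)!\,|\bar\lambda_{i_1}^{(p)}+\lambda_{i_2}^{(p)}|^{-(k_1+k_2-1)}$ as claimed. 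The main obstacle is the Jordan-block combinatorics: the easy side is the upper $\mathcal{O}$ bound, but obtaining the matching lower bound needed for the $\Theta$ asymptotics requires checking that the coefficient of the maximal-degree monomial $\tau^{k_1+k_2-2}$ (respectively, the coefficient $\langle Q_1\mu_{i_1,k_1}^{(p,0)},\mu_{i_2,k_2}^{(p,0)}\rangle$) is non-vanishing, which one argues via the biorthogonality structure inside each generalized eigenspace together with the positivity $Q_1\geq c>0$.
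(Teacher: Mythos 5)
Your proposal is correct and follows essentially the same route as the paper: the paper packages your substitution-plus-resolvent-identity step into the $2\times 2$ block semigroup generated by $B_0(p)$, but this yields the identical integrand $\sigma^2\sigma_{\text{R}}^2\langle (\text{e}^{A_0^*t}-\text{e}^{-\kappa t})\mu_{i_1,k_1}^{(p,\kappa)},\,Q_1(\text{e}^{A_0^*t}-\text{e}^{-\kappa t})\mu_{i_2,k_2}^{(p,\kappa)}\rangle$, the same Jordan-block expansion, and the same identification of the dominant terms ($(2\kappa)^{-1}$ for $\kappa\to0^+$; the $j_1=j_2=1$ term for $p\to0^-$). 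Your closing remark on verifying non-vanishing of the leading coefficients via $Q_1\geq c>0$ is a point the paper treats only implicitly, but it is consistent with its assumptions.
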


The theorem implies that as an eigenvalue $\lambda_1^{(p)}$ crosses the imaginary axis, i.e., $\text{Re}\left(\lambda_{1}^{(p)}\right)\to 0^-$, the variance evaluated w.r.t.~a probing function $v$ will diverge, as long as it is at least partly aligned with an associated generalized eigenfunction ${e^{(p)}_{1,k}}^*$. This can be considered to be the generic case since the alternative is only true for a restrictive set of functions (see Corollary \ref{cor:2} below). The biorthogonality of the generalized eigenfunctions of $A_0(p)$ and $A_0(p)^*$ implies that the rate of divergence directly corresponds to the location of the (generalized) eigenfunction $e^{(p)}_{1,M_1-k+1}$ within the Jordan block associated with $\lambda_1$. In particular, the closer $e^{(p)}_{1,M_1-k+1}$ is to being a true eigenfunction in that Jordan block, i.e., $k$ is close to $M_1$, the faster the divergence. An analytic description of the Jordan block structure is provided at the end of the section.
\begin{proof}
We define the operator
    \begin{align} \label{eq:B0_a}
        B_0(p)=\begin{pmatrix}
            A_0(p) & \sigma_{\text{R}} \\ 0 & -\kappa
        \end{pmatrix}
    \end{align}
and its adjoint in respect to $\mathcal{H}_1\times\mathcal{H}_1$
    \begin{align*}
        B_0(p)^*=\begin{pmatrix}
            A_0(p)^* & 0 \\ \sigma_{\text{R}} & -\kappa
        \end{pmatrix} .
    \end{align*}
These matrices allow to combine the two equations in~\eqref{eq:case_1} into one linear equation. They generate the $C_0$-semigroups
    \begin{align*}
        \text{e}^{B_0(p) t}=\begin{pmatrix}
            \text{e}^{A_0(p) t} & \sigma_{\text{R}} \left( \text{e}^{A_0(p) t} - \text{e}^{-\kappa t} \right) \operatorname{R}\left(A_0(p)+\kappa\right) \\ 0 & \text{e}^{-\kappa t}
        \end{pmatrix}
    \end{align*}
and
    \begin{align*}
        {\text{e}^{B_0(p) t}}^* = \text{e}^{B_0(p)^* t}=\begin{pmatrix}
            \text{e}^{A_0(p)^* t} & 0 \\ \sigma_{\text{R}} \operatorname{R}\left(A_0(p)^*+\kappa\right) \left( \text{e}^{A_0(p)^* t} - \text{e}^{-\kappa t} \right) & \text{e}^{-\kappa t}
        \end{pmatrix}
    \end{align*}
for $t>0$, respectively. The time-asymptotic variance operator is then obtained by applying It\^o's isometry to the mild solution formula \cite[Theorem 5.2]{da2014stochastic} and is given by
    \begin{align*}
        V_\infty
        &=\int_0^\infty \text{e}^{B_0(p) t} \begin{pmatrix}
            0 & 0 \\ 0 & \sigma^2 Q_1
        \end{pmatrix} {\text{e}^{B_0(p) t}}^* \text{d}t\\
        &= \int_0^\infty \begin{pmatrix}
            \text{e}^{A_0(p) t} & \sigma_{\text{R}} \left( \text{e}^{A_0(p) t} - \text{e}^{-\kappa t} \right) \operatorname{R}\left(A_0(p)+\kappa\right) \\ 0 & \text{e}^{-\kappa t}
        \end{pmatrix}
        \begin{pmatrix}
            0 & 0 \\ 0 & \sigma^2 Q_1
        \end{pmatrix} 
        \begin{pmatrix}
            \text{e}^{A_0(p)^* t} & 0 \\ \sigma_{\text{R}} \operatorname{R}\left(A_0(p)^*+\kappa\right) \left( \text{e}^{A_0(p)^* t} - \text{e}^{-\kappa t} \right) & \text{e}^{-\kappa t}
        \end{pmatrix} \text{d}t \\
        &= \sigma^2 \int_0^\infty \begin{pmatrix}
            \sigma_{\text{R}}^2 \left( \text{e}^{A_0(p) t} - \text{e}^{-\kappa t} \right) \operatorname{R}\left(A_0(p)+\kappa\right) Q_1 \operatorname{R}\left(A_0(p)^*+\kappa\right) \left( \text{e}^{A_0(p)^* t} - \text{e}^{-\kappa t} \right) & 
            \sigma_{\text{R}} \left( \text{e}^{A_0(p)-\kappa t} - \text{e}^{-2\kappa t} \right) \operatorname{R}\left(A_0(p)+\kappa\right) Q_1 \\ 
            \sigma_{\text{R}} Q_1 \operatorname{R}\left(A_0(p)^*+\kappa\right) \left( \text{e}^{A_0(p)^*-\kappa t} - \text{e}^{-2\kappa t} \right) & 
            \text{e}^{-2\kappa t} Q_1
        \end{pmatrix} \text{d}t .
    \end{align*}
In the next steps, we employ
    \begin{align*}
        \text{e}^{A_0(p)^* t} {e_{i,k}^{(p)}}^*(x)
        = \text{e}^{\overline{\lambda_{i}^{(p)}} t} \sum_{j=1}^{k} \frac{t^{k-j}}{(k-j)!} {e_{i,j}^{(p)}}^*(x) .
    \end{align*}
Setting $i_1,i_2\in\mathbb{N}_{>0}$, $k_1\in\{1,\dots,M_{i_1}\}$ and $k_2\in\{1,\dots,M_{i_2}\}$, this entails that
    \begin{align} \label{eq:main_formula_a}
        &\left\langle \begin{pmatrix}
            {e_{i_1,k_1}^{(p)}}^* \\ 0
        \end{pmatrix}, V_\infty \begin{pmatrix}
            {e_{i_2,k_2}^{(p)}}^* \\ 0
        \end{pmatrix} \right\rangle_{\mathcal{H}_1\times \mathcal{H}_1} \nonumber\\
        = &\sigma^2 \int_0^\infty \left\langle {e_{i_1,k_1}^{(p)}}^*, \sigma_{\text{R}}^2 \left( \text{e}^{A_0(p) t} - \text{e}^{-\kappa t} \right) \operatorname{R}\left(A_0(p)+\kappa\right) Q_1 \operatorname{R}\left(A_0(p)^*+\kappa\right) \left( \text{e}^{A_0(p)^* t} - \text{e}^{-\kappa t} \right) {e_{i_2,k_2}^{(p)}}^* \right\rangle \text{d}t \nonumber\\
        = &\sigma^2 \sigma_{\text{R}}^2 \int_0^\infty \Bigg\langle \text{e}^{\overline{\lambda_{i_1}^{(p)}} t} \sum_{j_1=1}^{k_1} \frac{t^{k_1-j_1}}{(k_1-j_1)!} \mu_{i_1,j_1}^{(p,\kappa)} 
        - \text{e}^{-\kappa t} \mu_{i_1,k_1}^{(p,\kappa)}, 
        Q_1 \Bigg( \text{e}^{\overline{\lambda_{i_2}^{(p)}} t} \sum_{j_2=1}^{k_2} \frac{t^{k_2-j_2}}{(k_2-j_2)!} \mu_{i_2,j_2}^{(p,\kappa)}
        - \text{e}^{-\kappa t} \mu_{i_2,k_2}^{(p,\kappa)} \Bigg) \Bigg\rangle \text{d}t \\
        = &\sigma^2 \sigma_{\text{R}}^2 \Bigg( \sum_{j_1=1}^{k_1} \sum_{j_2=1}^{k_2} \begin{pmatrix}
            k_1-j_1+k_2-j_2 \\ k_1-j_1
        \end{pmatrix} \left(-\overline{\lambda_{i_1}^{(p)}}-\lambda_{i_2}^{(p)}\right)^{-k_1+j_1-k_2+j_2-1}
        \left\langle \mu_{i_1,j_1}^{(p,\kappa)} ,
        Q_1 \mu_{i_2,j_2}^{(p,\kappa)} \right\rangle \nonumber\\
        &- \sum_{j_2=1}^{k_2} \left(-\lambda_{i_2}^{(p)}+\kappa\right)^{-k_2+j_2-1}
        \left\langle \mu_{i_1,k_1}^{(p,\kappa)} , 
        Q_1 \mu_{i_2,j_2}^{(p,\kappa)} \right\rangle
        - \sum_{j_1=1}^{k_1} \left(-\overline{\lambda_{i_1}^{(p)}}+\kappa\right)^{-k_1+j_1-1}
        \left\langle \mu_{i_1,j_1}^{(p,\kappa)} ,
        Q_1 \mu_{i_2,k_2}^{(p,\kappa)} \right\rangle
        + (2 \kappa)^{-1} \left\langle \mu_{i_1,k_1}^{(p,\kappa)} , 
        Q_1 \mu_{i_2,k_2}^{(p,\kappa)} \right\rangle \Bigg) , \nonumber
    \end{align}
which is the covariance of $u^{\text{d}}$ along the modes ${e_{i_1,k_1}^{(p)}}^*$ and ${e_{i_2,k_2}^{(p)}}^*$. Since $-\kappa$ is not in the spectrum of $A_0(p)^*$ and at most one term in the sum diverges in the limits $p\to 0^-$ and $\kappa\to 0^+$, it follows that the scaling laws are
    \begin{align*}
        \left\lvert \left\langle \begin{pmatrix}
            {e_{i_1,k_1}^{(p)}}^* \\ 0
        \end{pmatrix}, V_\infty \begin{pmatrix}
            {e_{i_2,k_2}^{(p)}}^* \\ 0
        \end{pmatrix} \right\rangle_{\mathcal{H}_1\times \mathcal{H}_1} \right\rvert
        &= \Theta_p \left( \left\lvert \overline{\lambda_{i_1}^{(p)}}+\lambda_{i_2}^{(p)} \right\rvert^{-(k_1+k_2-1)} \right)
    \end{align*}
and
    \begin{align*}
        \left\lvert \left\langle \begin{pmatrix}
            {e_{i_1,k_1}^{(p)}}^* \\ 0
        \end{pmatrix}, V_\infty \begin{pmatrix}
            {e_{i_2,k_2}^{(p)}}^* \\ 0
        \end{pmatrix} \right\rangle_{\mathcal{H}_1\times \mathcal{H}_1} \right\rvert
        &= \Theta_\kappa \left( \kappa^{-1} \right) .
    \end{align*}
\end{proof}

In Theorem \ref{thm:1}, the divergence of system variance is associated with specific observables, i.e., probing functions. In the limit $p\to 0^-$, this divergence occurs only for $i_1=i_2=1$ for construction. The fact that the generalized eigenfunctions of $A_0(p)^*$ are complete in $\mathcal{H}_1$ for any $p<0$ enables the extension of the EWS to a set of functions dense in $\mathcal{H}_1$. This makes the EWS a generic occurrence.

\begin{cor} \label{cor:2}
We consider $u^{\text{d}}=u^{\text{d}}(x,t)$, the mild solution of
    \begin{align*}
        \left\{ \begin{alignedat}{2}
            \text{d}u^{\text{d}}(x,t) &= \left( A_0(p) u^{\text{d}}(x,t) + \sigma_{\text{R}} \xi_1(x,t) \right) \text{d}t , \\
            \text{d}\xi_1(x,t) &= -\kappa \xi_1(x,t) + \sigma Q_1^\frac{1}{2} \text{d}W_t^1 .
        \end{alignedat} \right.
    \end{align*}
with initial conditions in $\mathcal{H}_1$, $x\in\mathcal{X}_1$, $p<0$ and $t>0$. For $M\in\mathbb{N}_{>0}$, we set $h_1^{(p)},h_2^{(p)}\in\underset{i=1}{\overset{M}{\bigoplus}} E_i(p)^* \setminus \underset{i=2}{\overset{M}{\bigoplus}} E_i(p)^*\subset \mathcal{H}_1$. Then 
    \begin{align*}
        \left\lvert \left\langle \begin{pmatrix}
            h_1^{(p)} \\ 0
        \end{pmatrix}, V_\infty \begin{pmatrix}
            h_2^{(p)} \\ 0
        \end{pmatrix} \right\rangle_{\mathcal{H}_1\times \mathcal{H}_1} \right\rvert
        &= \Theta_\kappa \left( \kappa^{-1} \right)
        \quad \text{for any} \quad p<0
    \end{align*}
holds. Furthermore, if $h_1^{(p)}$ and $h_2^{(p)}$ satisfy
\begin{align} \label{eq:condition_cor_1}
    a_{1,M_1,1}:=\left\langle h_1^{(p)}, e_{1,1}^{(p)} \right\rangle \neq 0 \neq \left\langle h_2^{(p)}, e_{1,1}^{(p)} \right\rangle=:a_{1,M_1,2}
\end{align}
for any $p\leq 0$, then
    \begin{align*}
        \left\lvert \left\langle \begin{pmatrix}
            h_1^{(p)} \\ 0
        \end{pmatrix}, V_\infty \begin{pmatrix}
            h_2^{(p)} \\ 0
        \end{pmatrix} \right\rangle_{\mathcal{H}_1\times \mathcal{H}_1} \right\rvert
        &= \Theta_p \left( \text{Re} \left( - \lambda_{1}^{(p)} \right)^{-(2 M_1-1)} \right)
        \quad \text{for any} \quad \kappa>0
    \end{align*}
holds.
\end{cor}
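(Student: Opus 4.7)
The plan is to expand both probing functions in the complete system of generalized eigenfunctions of $A_0(p)^*$, reducing the computation to a finite linear combination of covariances handled by Theorem~\ref{thm:1}. Since $h_l^{(p)} \in \bigoplus_{i=1}^{M} E_i(p)^*$ for $l\in\{1,2\}$, I write
\begin{equation*}
    h_l^{(p)} = \sum_{i=1}^{M} \sum_{k=1}^{M_i} a_{i,k,l}\, {e_{i,k}^{(p)}}^* ,
\end{equation*}
where, by biorthogonality of $\{e_{i,M_i-k+1}^{(p)}\}$ and $\{{e_{i,k}^{(p)}}^*\}$, the coefficient $a_{1,M_1,l}$ coincides (up to complex conjugation) with $\langle h_l^{(p)}, e_{1,1}^{(p)}\rangle$. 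The restriction $h_l^{(p)} \notin \bigoplus_{i=2}^{M} E_i(p)^*$ ensures that at least one of the $a_{1,k,l}$, $k \in \{1,\dots,M_1\}$, is non-zero, while condition~\eqref{eq:condition_cor_1} sharpens this to $a_{1,M_1,l} \neq 0$ for $l\in\{1,2\}$.

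By bilinearity of the covariance form, the quantity of interest decomposes into the finite sum
\begin{equation*}
    \sum_{i_1,k_1}\sum_{i_2,k_2} \overline{a_{i_1,k_1,1}}\,a_{i_2,k_2,2} \left\langle \begin{pmatrix}{e_{i_1,k_1}^{(p)}}^* \\ 0\end{pmatrix}, V_\infty \begin{pmatrix}{e_{i_2,k_2}^{(p)}}^* \\ 0\end{pmatrix}\right\rangle_{\mathcal{H}_1\times\mathcal{H}_1} ,
\end{equation*}
to which Theorem~\ref{thm:1} applies term by term. For the $\Theta_p$ claim, only terms with $i_1 = i_2 = 1$ carry a $p$-divergent contribution, since by assumption only $\lambda_1^{(p)}$ approaches the imaginary axis as $p \to 0^-$, while all other $\lambda_i^{(p)}$ stay bounded away from it. Among these, Theorem~\ref{thm:1} supplies the scaling $|\operatorname{Re}(\lambda_1^{(p)})|^{-(k_1+k_2-1)}$, uniquely maximised at $(k_1,k_2) = (M_1,M_1)$ with exponent $2M_1 - 1$. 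The matching coefficient $\overline{a_{1,M_1,1}}\,a_{1,M_1,2}$ is non-zero precisely by~\eqref{eq:condition_cor_1}, so the dominant term is not cancelled by the subleading ones and the claimed $\Theta_p$ rate follows.

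For the $\Theta_\kappa(\kappa^{-1})$ claim at fixed $p<0$, the upper bound is immediate, since every term in the finite sum scales as $\kappa^{-1}$ by Theorem~\ref{thm:1}. To extract a matching lower bound I collect the $\kappa^{-1}$-coefficient across the sum; invoking linearity of the resolvent in formula~\eqref{eq:main_formula_a}, the leading coefficient compactifies to
\begin{equation*}
    \frac{\sigma^2\sigma_{\text{R}}^2}{2}\left\langle \operatorname{R}(A_0(p)^*+\kappa) h_1^{(p)}, Q_1\, \operatorname{R}(A_0(p)^*+\kappa) h_2^{(p)} \right\rangle ,
\end{equation*}
whose limit as $\kappa \to 0^+$ is non-trivial thanks to injectivity of $\operatorname{R}(A_0(p)^*)$, the uniform lower bound $c > 0$ on the spectrum of $Q_1$, and non-triviality of the $h_l^{(p)}$. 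The main delicacy is precisely this non-vanishing step in the $\Theta_\kappa$ regime: for generic $h_1^{(p)} \neq h_2^{(p)}$ one must exclude that the many bilinear contributions conspire to cancel the $\kappa^{-1}$ coefficient. This is ruled out on the finite-dimensional invariant subspace $\bigoplus_{i=1}^{M} E_i(p)^*$ by the non-degeneracy of the $Q_1$-weighted bilinear form $\langle \operatorname{R}(A_0(p)^*)\cdot, Q_1 \operatorname{R}(A_0(p)^*)\cdot\rangle$, which follows from positive-definiteness of $Q_1$ together with injectivity of the resolvent, and is the only place in the argument where more than a direct application of Theorem~\ref{thm:1} is required.
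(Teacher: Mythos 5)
Your proposal follows essentially the same route as the paper's proof: expand $h_1^{(p)},h_2^{(p)}$ in the complete biorthogonal system of generalized eigenfunctions of $A_0(p)^*$, use bilinearity of the covariance form to reduce to a finite sum of the pairings computed in Theorem \ref{thm:1} via \eqref{eq:main_formula_a}, identify the dominant term $(i_1,i_2,k_1,k_2)=(1,1,M_1,M_1)$ in the limit $p\to 0^-$, and use condition \eqref{eq:condition_cor_1} to guarantee its coefficient does not vanish. This is exactly the paper's argument, and your treatment of the $\Theta_p$ rate is correct.

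One remark on your $\Theta_\kappa$ lower bound. You correctly isolate the $\kappa^{-1}$ coefficient as $\tfrac{1}{2}\sigma^2\sigma_{\text{R}}^2\langle \operatorname{R}(A_0(p)^*+\kappa)h_1^{(p)},\,Q_1\,\operatorname{R}(A_0(p)^*+\kappa)h_2^{(p)}\rangle$, matching the paper's grouping into $\langle \sum a_{i_1,k_1,1}^{(p)}\mu_{i_1,k_1}^{(p,\kappa)},\,Q_1\sum a_{i_2,k_2,2}^{(p)}\mu_{i_2,k_2}^{(p,\kappa)}\rangle$. However, your justification that this does not vanish — non-degeneracy of the $Q_1$-weighted bilinear form — does not deliver what is needed: a non-degenerate (even positive-definite) form can still vanish on a particular pair of \emph{distinct} non-zero vectors, e.g.\ orthogonal ones. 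So for $h_1^{(p)}\neq h_2^{(p)}$ the $\kappa^{-1}$ coefficient can in principle cancel, and your argument does not exclude this. To be fair, the paper's own proof asserts the same non-vanishing without further justification; the step is airtight only in the diagonal case $h_1^{(p)}=h_2^{(p)}$, where positivity of $Q_1$ (eigenvalues bounded below by $c>0$) and injectivity of the resolvent give $\langle \operatorname{R}h,Q_1\operatorname{R}h\rangle\geq c\,\lVert \operatorname{R}h\rVert^2>0$. You were right to flag this as the delicate point, but the fix you propose is not sufficient as stated.
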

\begin{proof}
We define the families $\left\{ a_{i,k,1}^{(p)} \right\}\subset\mathbb{C}$ and $\left\{ a_{i,k,2}^{(p)} \right\}\subset\mathbb{C}$ for $i\in\{1,\dots,M\}$ and $k\in\{1,\dots,M_i\}$, such that
\begin{align*}
    h_1^{(p)} = \underset{k\in\{1,\dots,M_i\}}{\sum_{i\in\{1,\dots,M\}}} a_{i,k,1}^{(p)} {e_{i,k}^{(p)}}^*
    \qquad \text{and} \qquad
    h_2^{(p)} = \underset{k\in\{1,\dots,M_i\}}{\sum_{i\in\{1,\dots,M\}}} a_{i,k,2}^{(p)} {e_{i,k}^{(p)}}^*
\end{align*}
for any $p\leq 0$. It follows that
\begin{align*}
    \left\langle \begin{pmatrix}
        h_1^{(p)} \\ 0
    \end{pmatrix}, V_\infty \begin{pmatrix}
        h_2^{(p)} \\ 0
    \end{pmatrix} \right\rangle_{\mathcal{H}_1\times \mathcal{H}_1}
    = \underset{k_1\in\{1,\dots,M_{i_1}\}}{\sum_{i_1\in\{1,\dots,M\}}} 
    \underset{k_2\in\{1,\dots,M_{i_2}\}}{\sum_{i_2\in\{1,\dots,M\}}}
    a_{i_1,k_1,1}^{(p)}
    \overline{a_{i_2,k_2,2}^{(p)}}
    \left\langle \begin{pmatrix}
        {e_{i_1,k_1}^{(p)}}^* \\ 0
    \end{pmatrix}, V_\infty \begin{pmatrix}
        {e_{i_2,k_2}^{(p)}}^* \\ 0
    \end{pmatrix} \right\rangle_{\mathcal{H}_1\times \mathcal{H}_1} .
\end{align*}
From the form \eqref{eq:main_formula_a} in Theorem \ref{thm:1}, this implies that
\begin{align*}
    \left\lvert \left\langle \begin{pmatrix}
        h_1^{(p)} \\ 0
    \end{pmatrix}, V_\infty \begin{pmatrix}
        h_2^{(p)} \\ 0
    \end{pmatrix} \right\rangle_{\mathcal{H}_1\times \mathcal{H}_1} \right\rvert
    &= \Theta_\kappa \left( \left\lvert (2 \kappa)^{-1} \underset{k_1\in\{1,\dots,M_{i_1}\}}{\sum_{i_1\in\{1,\dots,M\}}} 
    \underset{k_2\in\{1,\dots,M_{i_2}\}}{\sum_{i_2\in\{1,\dots,M\}}} 
    a_{i_1,k_1,1}^{(p)}
    \overline{a_{i_2,k_2,2}^{(p)}} 
    \left\langle  \mu_{i_1,k_1}^{(p,\kappa)} , 
    Q_1 \mu_{i_2,k_2}^{(p,\kappa)} \right\rangle \right\rvert \right)\\
    &= \Theta_\kappa \left( \kappa^{-1} \left\lvert \left\langle \underset{k_1\in\{1,\dots,M_{i_1}\}}{\sum_{i_1\in\{1,\dots,M\}}} a_{i_1,k_1,1}^{(p)} \mu_{i_1,k_1}^{(p,\kappa)} , 
    Q_1 \underset{k_2\in\{1,\dots,M_{i_2}\}}{\sum_{i_2\in\{1,\dots,M\}}} a_{i_2,k_2,2}^{(p)} \mu_{i_2,k_2}^{(p,\kappa)} \right\rangle \right\rvert \right)\\
    &= \Theta_\kappa \left( \kappa^{-1} \right) .
\end{align*}
In the limit $p\to 0^-$, the variance
    \begin{align*}
        \left\lvert \left\langle \begin{pmatrix}
            {e_{i_1,k_1}^{(p)}}^* \\ 0
        \end{pmatrix}, V_\infty \begin{pmatrix}
            {e_{i_2,k_2}^{(p)}}^* \\ 0
        \end{pmatrix} \right\rangle_{\mathcal{H}_1\times \mathcal{H}_1} \right\rvert
        &= \Theta_p \left( \left\lvert \overline{\lambda_{i_1}^{(p)}}+\lambda_{i_2}^{(p)} \right\rvert^{-(k_1+k_2-1)} \right)
    \end{align*}
diverges only for $i_1=i_2=1$. Furthermore, its scaling law is defined by the choice of $k_1,k_2\in\{1,\dots,M_1\}$, and the highest rate of divergence is associated with $k_1=k_2=M_1$. Equation \eqref{eq:main_formula_a} and condition \eqref{eq:condition_cor_1} imply that
\begin{align*}
    \left\lvert \left\langle \begin{pmatrix}
        h_1^{(p)} \\ 0
    \end{pmatrix}, V_\infty \begin{pmatrix}
        h_2^{(p)} \\ 0
    \end{pmatrix} \right\rangle_{\mathcal{H}_1\times \mathcal{H}_1} \right\rvert
    &= \Theta_p \left( \left\lvert \underset{k_1\in\{1,\dots,M_{i_1}\}}{\sum_{i_1\in\{1,\dots,M\}}} 
    \underset{k_2\in\{1,\dots,M_{i_2}\}}{\sum_{i_2\in\{1,\dots,M\}}} 
    a_{i_1,k_1,1}^{(p)}
    \overline{a_{i_2,k_2,2}^{(p)}} 
    \left\langle \begin{pmatrix}
        {e_{i_1,k_1}^{(p)}}^* \\ 0
    \end{pmatrix}, V_\infty \begin{pmatrix}
        {e_{i_2,k_2}^{(p)}}^* \\ 0
    \end{pmatrix} \right\rangle_{\mathcal{H}_1\times \mathcal{H}_1} \right\rvert \right) \\
    &= \Theta_p \left( \left\lvert 
    \sum_{k_1=1}^{M_1} 
    \sum_{k_2=1}^{M_1} 
    a_{1,k_1,1}^{(p)}
    \overline{a_{1,k_2,2}^{(p)}} 
    \left\langle \begin{pmatrix}
        {e_{1,k_1}^{(p)}}^* \\ 0
    \end{pmatrix}, V_\infty \begin{pmatrix}
        {e_{1,k_2}^{(p)}}^* \\ 0
    \end{pmatrix} \right\rangle_{\mathcal{H}_1\times \mathcal{H}_1} \right\rvert \right) \\
    &= \Theta_p \left( \left\lvert 
    \sum_{k_1=1}^{M_1} 
    \sum_{k_2=1}^{M_1} 
    a_{1,k_1,1}^{(p)}
    \overline{a_{1,k_2,2}^{(p)}} 
    \text{Re} \left( - \lambda_{1}^{(p)} \right)^{-(k_1+k_2-1)} \right\rvert \right) \\
    &= \Theta_p \left(  
    \text{Re} \left( - \lambda_{1}^{(p)} \right)^{-(2 M_1 -1)} \right) .
\end{align*}
\end{proof}

The statement of Corollary \ref{cor:2} justifies the observation of the time-asymptotic variance along a large family of functions. As an example, any appropriate approximation of an indicator function is a suitable direction along which the EWS can be studied \cite{Bernuzzi2024EWSSPDEBoundary}. Equivalently, the data variance on a large time interval in a hand-picked region of space is likely to display the highest rate of divergence depending on the model.

\subsection{Continuous Spectrum}

We study $u^{\text{c}}=u^{\text{c}}(x,t)$, the mild solution of \eqref{eq:case_2} for any $x\in\mathcal{X}_1$, $p<0$ and $t>0$. Since the spectrum of the considered linear drift operator is not discrete, the observation of the time-asymptotic variance along favored modes is not viable. Hence, we search for other functions in $\mathcal{H}_1$ that enable the construction of the EWS.

\begin{thm} \label{thm:3}
We consider $u^{\text{c}}=u^{\text{c}}(x,t)$, the mild solution of
    \begin{align*}
        \left\{ \begin{alignedat}{2}
            \text{d}u^{\text{c}}(x,t) &= \left( f(x,p) u^{\text{c}}(x,t) + \sigma_{\text{R}} \xi_1(x,t) \right) \text{d}t , \\
            \text{d}\xi_1(x,t) &= -\kappa \xi_1(x,t) + \sigma Q_1^\frac{1}{2} \text{d}W_t^1 ,
        \end{alignedat} \right.
    \end{align*}
with initial conditions in $\mathcal{H}_1$, $x\in\mathcal{X}_1$, $p<0$ and $t>0$. For any $g_1,g_2 \in \mathcal{H}_1$, it holds
    \begin{align} \label{eq:main_formula_b}
        \left\langle \begin{pmatrix}
            g_1 \\ 0
        \end{pmatrix}, V_\infty \begin{pmatrix}
            g_2 \\ 0
        \end{pmatrix} \right\rangle_{\mathcal{H}_1\times \mathcal{H}_1}
        &= \sigma^2 \sigma_{\text{R}}^2 \int_0^\infty \left\langle \frac{ \text{e}^{f(\cdot,p) t} - \text{e}^{-\kappa t} }{f(\cdot,p) + \kappa} g_1, Q_1 \frac{ \text{e}^{f(\cdot,p) t} - \text{e}^{-\kappa t} }{f(\cdot,p) + \kappa} g_2 \right\rangle \text{d}t .
    \end{align}
\end{thm}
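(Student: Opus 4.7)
The plan is to mirror the proof of Theorem~\ref{thm:1}, replacing the role of the closed operator $A_0(p)$ with the multiplication operator $M_f$ defined by $(M_f v)(x) = f(x,p) v(x)$. The key structural simplification is that $M_f$ is self-adjoint (since $f$ is real-valued) and generates the pointwise-multiplication semigroup $\left(\mathrm{e}^{M_f t} v\right)(x) = \mathrm{e}^{f(x,p)t}v(x)$, so the spectral-decomposition step needed in Theorem~\ref{thm:1} is bypassed entirely: computations can be done in the position representation.

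First, I would define the stacked generator
\begin{align*}
B_0(p) = \begin{pmatrix} M_f & \sigma_{\text{R}} \\ 0 & -\kappa \end{pmatrix}
\end{align*}
on $\mathcal{H}_1 \times \mathcal{H}_1$ and derive its semigroup exactly as in \eqref{eq:B0_a}. The off-diagonal entry requires evaluating
\begin{align*}
\sigma_{\text{R}} \int_0^t \mathrm{e}^{M_f (t-s)}\mathrm{e}^{-\kappa s}\,\mathrm{d}s = \sigma_{\text{R}} \frac{\mathrm{e}^{M_f t} - \mathrm{e}^{-\kappa t}}{M_f + \kappa},
\end{align*}
which is well-defined as a bounded multiplication operator because $f(x,p)+\kappa\neq 0$ for all $x\in\mathcal{X}_1$ by assumption, and the quotient extends continuously in $x$ (removable singularity) even where $f(x,p)+\kappa$ is small. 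The adjoint semigroup $\mathrm{e}^{B_0(p)^* t}$ is obtained by transposition, with the same multiplication quotient appearing in the lower-left block.

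Next, I would invoke It\^o's isometry applied to the mild solution of the stacked system, obtaining
\begin{align*}
V_\infty = \int_0^\infty \mathrm{e}^{B_0(p)t}\begin{pmatrix} 0 & 0 \\ 0 & \sigma^2 Q_1\end{pmatrix}\mathrm{e}^{B_0(p)^* t}\,\mathrm{d}t,
\end{align*}
exactly as in the discrete-spectrum case. Reading off the top-left block of the integrand and testing against $(g_1,0)^\top$ and $(g_2,0)^\top$ yields the claimed identity \eqref{eq:main_formula_b} after absorbing the off-diagonal multiplication quotient into the test functions.

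The main obstacle I anticipate is a rigorous justification of the integral in \eqref{eq:main_formula_b}, i.e.\ showing that the time integral converges in $\mathcal{H}_1$ and that swapping scalar product with time integration is permitted. Because $f(x,p)<0$ on $\mathcal{X}_1\setminus\{x_\ast\}$ and may vanish at $x_\ast$ when $p=0$, the integrand $\left[(\mathrm{e}^{f(\cdot,p)t}-\mathrm{e}^{-\kappa t})/(f(\cdot,p)+\kappa)\right]^2$ need not be uniformly bounded in $t$ at the point $x_\ast$ as $p\to 0^-$; however for fixed $p<0$ one has $f(\cdot,p)\leq -\delta(p)<0$, making the integrand dominated by an exponentially decaying envelope and thus integrable by the dominated convergence theorem together with the boundedness of $Q_1$. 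This is precisely what is needed to conclude the identity; the divergence behavior in $p$ and $\kappa$ is then encoded explicitly in the right-hand side and left for the subsequent scaling analysis.
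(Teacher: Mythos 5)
Your proposal is correct and follows essentially the same route as the paper: the paper likewise forms the stacked generator $B_0(p)$ with the multiplication operator $f(\cdot,p)$ in the top-left block, computes the $C_0$-semigroup with the off-diagonal entry $\sigma_{\text{R}}\,(\mathrm{e}^{f(\cdot,p)t}-\mathrm{e}^{-\kappa t})/(f(\cdot,p)+\kappa)$, writes $V_\infty=\int_0^\infty \mathrm{e}^{B_0(p)t}\,\mathrm{diag}(0,\sigma^2 Q_1)\,\mathrm{e}^{B_0(p)^*t}\,\mathrm{d}t$, and tests the top-left block against $(g_1,0)^\top$ and $(g_2,0)^\top$. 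Your added remarks on the removable singularity and on dominated convergence for fixed $p<0$ are fine but not needed given the standing assumption $f(x,p)+\kappa\neq 0$.
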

\begin{proof}
We again define the operators
    \begin{align} \label{eq:B0_b}
        B_0(p)=\begin{pmatrix}
            f(\cdot,p) & \sigma_{\text{R}} \\ 0 & -\kappa
        \end{pmatrix}
    \end{align}
and its adjoint in $\mathcal{H}_1\times\mathcal{H}_1$,
    \begin{align*}
        B_0(p)^*=\begin{pmatrix}
            f(\cdot,p) & 0 \\ \sigma_{\text{R}} & -\kappa
        \end{pmatrix}
    \end{align*}
to combine the two equations in~\eqref{eq:case_2}. They generate the $C_0$-semigroups in $t$,
    \begin{align*}
        \text{e}^{B_0(p) t}=\begin{pmatrix}
            \text{e}^{f(\cdot,p) t} & \sigma_{\text{R}} \frac{ \text{e}^{f(\cdot,p) t} - \text{e}^{-\kappa t} }{f(\cdot,p) + \kappa} \\ 0 & \text{e}^{-\kappa t}
        \end{pmatrix}
    \end{align*}
and
    \begin{align*}
        {\text{e}^{B_0(p) t}}^* = \text{e}^{B_0(p)^* t}=\begin{pmatrix}
            \text{e}^{f(\cdot,p) t} & 0 \\ \sigma_{\text{R}} \frac{ \text{e}^{f(\cdot,p) t} - \text{e}^{-\kappa t} }{f(\cdot,p) + \kappa} & \text{e}^{-\kappa t}
        \end{pmatrix} ,
    \end{align*}
respectively. It follows from the construction of the covariance operator \cite{Bernuzzi2024EWSSPDEContinuousSpectrum,da1993evolution} that
    \begin{align*}
        V_\infty
        &= \int_0^\infty \text{e}^{B_0(p) t} \begin{pmatrix}
            0 & 0 \\ 0 & \sigma^2 Q_1
        \end{pmatrix} {\text{e}^{B_0(p) t}}^* \text{d}t\\
        &= \int_0^\infty \begin{pmatrix}
            \text{e}^{f(\cdot,p) t} & \sigma_{\text{R}} \frac{ \text{e}^{f(\cdot,p) t} - \text{e}^{-\kappa t} }{f(\cdot,p) + \kappa} \\ 0 & \text{e}^{-\kappa t}
        \end{pmatrix}
        \begin{pmatrix}
            0 & 0 \\ 0 & \sigma^2 Q_1
        \end{pmatrix} 
        \begin{pmatrix}
            \text{e}^{f(\cdot,p) t} & 0 \\ \sigma_{\text{R}} \frac{ \text{e}^{f(\cdot,p) t} - \text{e}^{-\kappa t} }{f(\cdot,p) + \kappa} & \text{e}^{-\kappa t}
        \end{pmatrix} \text{d}t \\
        &= \sigma^2 \int_0^\infty \begin{pmatrix}
            \sigma_{\text{R}}^2 \frac{ \text{e}^{f(\cdot,p) t} - \text{e}^{-\kappa t} }{f(\cdot,p) + \kappa} Q_1 \frac{ \text{e}^{f(\cdot,p) t} - \text{e}^{-\kappa t} }{f(\cdot,p) + \kappa} & 
            \sigma_{\text{R}} \frac{ \text{e}^{f(\cdot,p) t} - \text{e}^{-\kappa t} }{f(\cdot,p) + \kappa} Q_1 \\ 
            \sigma_{\text{R}} Q_1 \frac{ \text{e}^{f(\cdot,p) t} - \text{e}^{-\kappa t} }{f(\cdot,p) + \kappa} & 
            \text{e}^{-2\kappa t} Q_1
        \end{pmatrix} \text{d}t .
    \end{align*}
Setting $g_1,g_2\in\mathcal{H}_1$, the time-asymptotic covariance along those functions is then
    \begin{align*}
        \left\langle \begin{pmatrix}
            g_1 \\ 0
        \end{pmatrix}, V_\infty \begin{pmatrix}
            g_2 \\ 0
        \end{pmatrix} \right\rangle_{\mathcal{H}_1\times \mathcal{H}_1}
        &= \sigma^2 \sigma_{\text{R}}^2 \int_0^\infty \left\langle \frac{ \text{e}^{f(\cdot,p) t} - \text{e}^{-\kappa t} }{f(\cdot,p) + \kappa} g_1, Q_1 \frac{ \text{e}^{f(\cdot,p) t} - \text{e}^{-\kappa t} }{f(\cdot,p) + \kappa} g_2 \right\rangle \text{d}t ,
    \end{align*}
for $p<0$.
\end{proof}

We set $\alpha>0$ and we also consider the case $\mathcal{X}_1\subset\mathcal{R}$. We then focus on $f(x,p)=-\lvert x \rvert^\alpha + p$ for any $x\in\mathbb{R}$, $p\leq 0$. This type of function enables further construction of EWS in case of $f$ being analytic. We define $g_1=g_2=\mathbbm{1}_\mathcal{S}$, the indicator function on the Lebesgue-measurable set $\mathcal{S}\subset \mathcal{X}_1$ and assume $0=x_\ast\in\mathcal{S}$. The following corollary makes the results about system variance in the above theorem concrete. These results are encouraging in the sense that taking an indicator function $\mathbbm{1}_\mathcal{S}$ of a Lebesgue-measurable set in $\mathcal{X}_1$ as a probing function, we always observe the EWS of diverging variance.

\begin{cor} \label{cor:4}
\begin{enumerate}
    \item[(a)] We study $u^{\text{c}}=u^{\text{c}}(x,t)$, the mild solution of 
    \begin{align*}
        \left\{ \begin{alignedat}{2}
            \text{d}u^{\text{c}}(x,t) &= \left( \left(-\lvert x \rvert^\alpha + p \right) u^{\text{c}}(x,t) + \sigma_{\text{R}} \xi_1(x,t) \right) \text{d}t , \\
            \text{d}\xi_1(x,t) &= -\kappa \xi_1(x,t) + \sigma Q_1^\frac{1}{2} \text{d}W_t^1 ,
        \end{alignedat} \right.
    \end{align*}
with initial conditions in $\mathcal{H}_1$, $x\in\mathcal{X}_1$, $\alpha>0$, $p<0$ and $t>0$. Then, for any $g=\mathbbm{1}_\mathcal{S}$, the scaling law of the time-asymptotic variance along $g$ is given by
    \begin{align*}
        \left\langle \begin{pmatrix}
            g \\ 0
        \end{pmatrix}, V_\infty \begin{pmatrix}
            g \\ 0
        \end{pmatrix} \right\rangle_{\mathcal{H}_1\times \mathcal{H}_1}
        &= \Theta_\kappa \left( \kappa^{-1} \right)
    \end{align*}
for the limit $\kappa\to 0^+$. It also entails that, for $p\to 0^-$, the following holds:
    \begin{align*}
        \begin{alignedat}{3}
            &\mathllap{\bullet} \qquad \qquad
            \left\langle \begin{pmatrix}
                g \\ 0
            \end{pmatrix}, V_\infty \begin{pmatrix}
                g \\ 0
            \end{pmatrix} \right\rangle_{\mathcal{H}_1\times \mathcal{H}_1}
            &&= \Theta_p \left( (-p)^{-1+\frac{1}{\alpha}} \right) ,
            \quad &&\text{for} \quad \alpha>1 ; \\
            &\mathllap{\bullet} \qquad \qquad
            \left\langle \begin{pmatrix}
                g \\ 0
            \end{pmatrix}, V_\infty \begin{pmatrix}
                g \\ 0
            \end{pmatrix} \right\rangle_{\mathcal{H}_1\times \mathcal{H}_1}
            &&= \Theta_p \left( \text{log}(-p) \right) ,
            \quad &&\text{for} \quad \alpha=1 ; \\
            &\mathllap{\bullet} \qquad \qquad
            \left\langle \begin{pmatrix}
                g \\ 0
            \end{pmatrix}, V_\infty \begin{pmatrix}
                g \\ 0
            \end{pmatrix} \right\rangle_{\mathcal{H}_1\times \mathcal{H}_1}
            &&= \Theta_p \left( 1 \right) ,
            \quad &&\text{for} \quad 0<\alpha<1 .
        \end{alignedat}
    \end{align*}
    \item[(b)] We consider $u^{\text{c}}=u^{\text{c}}(x,t)$, the mild solution of 
    \begin{align*}
        \left\{ \begin{alignedat}{2}
            \text{d}u^{\text{c}}(x,t) &= \left( \left( f_{\text{an}}(x) + p \right) u^{\text{c}}(x,t) + \sigma_{\text{R}} \xi_1(x,t) \right) \text{d}t , \\
            \text{d}\xi_1(x,t) &= -\kappa \xi_1(x,t) + \sigma Q_1^\frac{1}{2} \text{d}W_t^1 ,
        \end{alignedat} \right.
    \end{align*}
with initial conditions in $\mathcal{H}_1$, $x\in\mathcal{X}_1$, $p<0$ and $t>0$. We assume that 
    \begin{align*}
        f_{\text{an}}(x)= \sum_{n=1}^\infty a_n x^n
    \end{align*}
for any $x\in\mathcal{X}_1$ and for the family $\left\{a_n\right\}_{n\in\mathbb{N}_{>0}}\subset\mathbb{R}$. It follows that for any $g=\mathbbm{1}_\mathcal{S}$, the scaling law of the time-asymptotic variance along $g$ is
    \begin{align*}
        \left\langle \begin{pmatrix}
            g \\ 0
        \end{pmatrix}, V_\infty \begin{pmatrix}
            g \\ 0
        \end{pmatrix} \right\rangle_{\mathcal{H}_1\times \mathcal{H}_1}
        &= \Theta_\kappa \left( \kappa^{-1} \right)
    \end{align*}
for the limit $\kappa\to 0^+$.
Moreover, we fix $n_\ast$ such that
    \begin{align*}
        n_\ast=\underset{n\in\mathbb{N}_{>0}}{\text{argmin}} \left\{ a_n\neq 0 \right\} .
    \end{align*}
Then, for any $g=\mathbbm{1}_\mathcal{S}$, the rate of divergence of the time-asymptotic variance along $g$ for the limit $p\to 0^-$ is given by
    \begin{align*}
        \left\langle \begin{pmatrix}
            g \\ 0
        \end{pmatrix}, V_\infty \begin{pmatrix}
            g \\ 0
        \end{pmatrix} \right\rangle_{\mathcal{H}_1\times \mathcal{H}_1}
        &= \Theta_p \left( (-p)^{-1+\frac{1}{n_\ast}} \right) , \quad \text{if} \quad n_\ast>1 ,
    \end{align*}
    or 
    \begin{align*}
        \left\langle \begin{pmatrix}
            g \\ 0
        \end{pmatrix}, V_\infty \begin{pmatrix}
            g \\ 0
        \end{pmatrix} \right\rangle_{\mathcal{H}_1\times \mathcal{H}_1}
        &= \Theta_p \left( \text{log}(-p) \right) , \quad \text{if} \quad n_\ast=1 .
    \end{align*}
\end{enumerate}
\end{cor}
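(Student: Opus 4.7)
The plan is to start from the integral representation in Theorem \ref{thm:3}, specialized to $g_1 = g_2 = \mathbbm{1}_\mathcal{S}$. Since $f(x,p)$ acts as multiplication, I can perform the $t$-integration pointwise: setting $a = -f(x,p) > 0$ and $b = -f(y,p) > 0$, one finds
$$\int_0^\infty \frac{(\text{e}^{-at} - \text{e}^{-\kappa t})(\text{e}^{-bt} - \text{e}^{-\kappa t})}{(\kappa - a)(\kappa - b)}\, \text{d}t = \frac{a + b + 2\kappa}{2\kappa\,(a+b)\,(a+\kappa)\,(b+\kappa)}.$$
Combining with the kernel of $Q_1$ and using that its eigenvalues are uniformly bounded below by $c > 0$ (and bounded above, as required for the noise to be well-defined), the scaling of the variance is controlled, up to positive constants, by the diagonal contribution
$$\mathcal{J}(p,\kappa) := \int_\mathcal{S} \frac{\text{d}x}{(-f(x,p))\,\kappa\,(-f(x,p)+\kappa)}.$$

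For the limit $\kappa \to 0^+$ with $p < 0$ fixed, the factor $-f(x,p)$ is bounded below by $-p > 0$ uniformly on $\mathcal{S}$, so after pulling $1/\kappa$ out of the integrand, the remainder has a finite, positive limit as $\kappa \to 0$. This gives the $\Theta_\kappa(\kappa^{-1})$ scaling in both parts (a) and (b) immediately.

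For the limit $p \to 0^-$ with $\kappa > 0$ fixed in part (a), I would localize to a neighborhood $|x| < \delta$ of $x_\ast = 0$, since outside this neighborhood $|x|^\alpha - p$ is bounded below by a positive constant and the contribution to $\mathcal{J}$ stays finite. Near the origin, substitute $x = (-p)^{1/\alpha} y$, turning the local integral into
$$(-p)^{\frac{1}{\alpha} - 1} \int_{|y| < \delta (-p)^{-1/\alpha}} \frac{\text{d}y}{(|y|^\alpha + 1)\left((-p)(|y|^\alpha + 1) + \kappa\right)}.$$
For $\alpha > 1$, the $y$-integral converges to a finite positive limit (the integrand decays like $|y|^{-2\alpha}$), yielding $\Theta_p((-p)^{-1+1/\alpha})$. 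For $\alpha = 1$, the integrand decays like $|y|^{-1}$, producing a logarithmic divergence matched with the compact cutoff of $\mathcal{S}$, giving $\Theta_p(\log(-p))$. For $0 < \alpha < 1$, the singularity $|x|^{-\alpha}$ at the origin is integrable uniformly in $p$, so $\mathcal{J}$ stays bounded and bounded below, giving $\Theta_p(1)$.

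Part (b) reduces to part (a) by analyticity: near $x_\ast = 0$, we have $f_{\text{an}}(x) = a_{n_\ast} x^{n_\ast}(1 + \mathcal{O}(x))$, and the sign conditions on $f$ force $a_{n_\ast} < 0$. Hence on a sufficiently small neighborhood, $-f_{\text{an}}(x)$ is pinched between two positive multiples of $|x|^{n_\ast}$, so the same rescaling argument applies with $\alpha$ replaced by $n_\ast$. Contributions from outside the neighborhood remain bounded in the limit $p \to 0^-$. The main technical obstacle is the rigorous localization, namely controlling the off-diagonal terms in the full double integral against $Q_1$ and ensuring they neither enhance nor suppress the diagonal scaling; the borderline case $n_\ast = 1$ (or $\alpha = 1$) is particularly delicate, since the logarithmic answer depends on the cutoff set by $\mathcal{S}$ and cancellations between the upper and lower limits must be tracked carefully.
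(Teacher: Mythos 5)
Your route is essentially the paper's: start from Theorem \ref{thm:3}, strip out $Q_1$ using its two-sided spectral bounds, evaluate the time integral in closed form, and extract the $p\to 0^-$ rate by rescaling $x=(-p)^{1/\alpha}y$ near $x_\ast$. The one point you flag as ``the main technical obstacle'' --- controlling the off-diagonal terms of the double integral against the kernel of $Q_1$ --- is not actually an obstacle, and the way to see this is to reorder your first two steps. For each fixed $t$, the integrand in \eqref{eq:main_formula_b} with $g_1=g_2=g$ is the genuine quadratic form $\langle h_t, Q_1 h_t\rangle$ with $h_t = \frac{\text{e}^{f(\cdot,p)t}-\text{e}^{-\kappa t}}{f(\cdot,p)+\kappa}\,g$, so the assumption $c\operatorname{Id}\le Q_1\le C\operatorname{Id}$ gives $c\|h_t\|^2 \le \langle h_t,Q_1 h_t\rangle \le C\|h_t\|^2$ pointwise in $t$; integrating these non-negative functions in $t$ then sandwiches the variance between constant multiples of $\int_0^\infty\int_{\mathcal{S}} h_t(x)^2\,\text{d}x\,\text{d}t$, which is exactly your diagonal integral $\mathcal{J}$ up to explicit constants. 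No off-diagonal analysis is needed; this is precisely how the paper reduces to the single spatial integral $\int_{\mathcal{S}}\bigl(-\tfrac{1}{2f}-\tfrac{2}{f-\kappa}+\tfrac{1}{2\kappa}\bigr)\left(f+\kappa\right)^{-2}\text{d}x$ before any asymptotics are taken.

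Two smaller remarks. First, in the $\alpha>1$ case your rescaled integrand decays like $|y|^{-\alpha}$, not $|y|^{-2\alpha}$: on the whole domain $|y|<\delta(-p)^{-1/\alpha}$ the factor $(-p)(|y|^\alpha+1)+\kappa$ stays between $\kappa$ and $\delta^\alpha+\kappa+(-p)$, so it contributes $\Theta(1)$ rather than additional decay. The conclusion is unaffected, since $|y|^{-\alpha}$ is still integrable at infinity for $\alpha>1$ and dominated convergence applies. Second, the paper outsources the $p\to 0^-$ asymptotics of $\int_{\mathcal{S}}(|x|^\alpha-p)^{-1}\text{d}x$ to an earlier reference, whereas you compute them directly by localization and rescaling; your version is self-contained and correct, including the reduction of part (b) to part (a) via $-f_{\text{an}}(x)\asymp |x|^{n_\ast}$ near $x_\ast$. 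Once the diagonal reduction is repaired as above, the proof is complete.
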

\begin{proof}
We consider the $u^{\text{c}}$ as assumed in Theorem \ref{thm:3}. The assumption of $Q_1$ bounded and bounded from below far from zero and the formula \eqref{eq:main_formula_b} imply that the rates of divergence of the time-asymptotic variance are equivalent to the integral
    \begin{align*}
        &\int_0^\infty \left\langle \frac{ \text{e}^{f(\cdot,p) t} - \text{e}^{-\kappa t} }{f(\cdot,p) + \kappa} g, \frac{ \text{e}^{f(\cdot,p) t} - \text{e}^{-\kappa t} }{f(\cdot,p) + \kappa} g \right\rangle \text{d}t
        = \int_0^\infty \int_\mathbb{R} \left( \frac{ \text{e}^{f(x,p) t} - \text{e}^{-\kappa t} }{f(x,p) + \kappa} g \right)^2 \text{d}x \; \text{d}t \\
        = &\int_0^\infty \int_\mathcal{S} \left( \frac{ \text{e}^{f(x,p) t} - \text{e}^{-\kappa t} }{f(x,p) + \kappa} \right)^2 \text{d}x \; \text{d}t 
        = \int_\mathcal{S} \left( - \frac{1}{2 f(x,p)} - \frac{2}{f(x,p) - \kappa} + \frac{1}{2 \kappa} \right) \frac{ 1 }{\left( f(x,p) + \kappa \right)^2} \text{d}x .
    \end{align*}
From the negative sign of the analytic function $f$ for any $p<0$, it follows that
    \begin{align*}
        \left\langle \begin{pmatrix}
            g \\ 0
        \end{pmatrix}, V_\infty \begin{pmatrix}
            g \\ 0
        \end{pmatrix} \right\rangle_{\mathcal{H}_1\times \mathcal{H}_1}
        = \Theta_\kappa \left( \kappa^{-1} \right) .
    \end{align*}
In part $(a)$ we assume $f(x,p)=-\lvert x \rvert^\alpha + p$ for any $x\in\mathcal{X}_1$ and $p<0$. As such, it entails that
    \begin{align} \label{eq:rate_cont}
        \left\langle \begin{pmatrix}
            g \\ 0
        \end{pmatrix}, V_\infty \begin{pmatrix}
            g \\ 0
        \end{pmatrix} \right\rangle_{\mathcal{H}_1\times \mathcal{H}_1}
        = \Theta_p \left( -\int_\mathcal{S} \frac{1}{f(x,p)} \text{d}x \right)
        = \Theta_p \left( \int_\mathcal{S} \frac{1}{\lvert x \rvert^\alpha - p} \text{d}x \right).
    \end{align}
The rate depends on the parameter $\alpha$ and can be obtained as described in \cite[Theorem 3.1]{Bernuzzi2024EWSSPDEContinuousSpectrum}.
From the construction of $n_\ast$, it follows that there exists $c>0$ such that
\begin{align*}
    c x^{n_\ast} \leq f_{\text{an}}(x) \leq c^{-1} x^{x_\ast}
\end{align*}
for $x$ in a neighbourhood of $x_\ast=0$. Then, the rate in the limit $p\to 0^-$ described in \eqref{eq:rate_cont} entails the remainder of statement $(b)$.
\end{proof}

Corollary \ref{cor:4} provides EWS to different models. An example of a system that is known to display linear drift with continuous spectrum is the Swift-Hohenberg equation \cite{burke2007homoclinic,burke2007snakes,kao2014spatial} on $\mathcal{X}_1=\mathbb{R}$, known to find application in the study of electricity fields in crystal optical fiber resonator \cite{hariz2019swift}, or on $\mathcal{X}_1=\mathbb{R}^2$, for which a generalization has been applied in optics \cite{lega1994swift}. The proposed EWS thus enable the prediction of a bifurcation upon the presence of red noise in the model. The results of Corollary \ref{cor:4} are also relevant in the case of a purely discrete spectrum with small gaps between eigenvalues. In fact, for the Ginzburg-Landau equation on a large interval, the inclusion of red noise to represent minor perturbations in the system enables the construction of EWS that do not recognize the discreteness of the spectrum until $p$ is in the proximity of $0$. Consequently, the EWS are damped prior to the bifurcation threshold. Such a model finds applications in phase-ordering kinetic \cite{bray2002theory}, quantum mechanics \cite{faris1982large} and climate science \cite{hogele2011metastability}.

\subsection{Boundary Noise}

We observe the behaviour of $u^{\text{b}}=u^{\text{b}}(x,t)$, the mild solution of \eqref{eq:case_3}, in the limits $p\to 0^-$ and $\kappa\to 0^+$. As in the previous subsections, we explore the scaling law of the time-asymptotic variance as an EWS. Since the linear operator associated with the drift term in \eqref{eq:case_3} has a purely discrete spectrum, the time-asymptotic variance can be studied as an observable along favoured modes. Yet, the structure of the noise requires a different approach to its construction in comparison to the other examples in the section. The difference lies in the fact that the noise is now present in the boundary conditions, rather than directly in the PDE itself. Still, the following theorem states that the generic divergence of system variance is recovered just as in the first considered case.

\begin{thm} \label{thm:5}
We consider $u^{\text{b}}=u^{\text{b}}(x,t)$, mild solution of
    \begin{align*}
        \left\{ \begin{alignedat}{2}
            \text{d}u^{\text{b}}(x,t) &= A(p) u^{\text{b}}(x,t) \text{d}t , \\
            \gamma(p) u^{\text{b}}(x,t) &= \sigma_{\text{R}} \xi_0(x,t) , \\ 
            \text{d}\xi_0(x,t) &= -\kappa \xi_0(x,t) + \sigma Q_0^\frac{1}{2} \text{d}W_t^0 ,
        \end{alignedat} \right.
    \end{align*}
with initial conditions in $\mathcal{H}_1$, $x\in\mathcal{X}_1$, $p<0$ and $t>0$. Then, the scaling laws
    \begin{align*}
        \left\lvert \left\langle \begin{pmatrix}
            {e_{i_1,k_1}^{(p)}}^* \\ 0
        \end{pmatrix}, V_\infty^{\text{b}} \begin{pmatrix}
            {e_{i_2,k_2}^{(p)}}^* \\ 0
        \end{pmatrix} \right\rangle_{\mathcal{H}_1\times \mathcal{H}_0} \right\rvert
        = \mathcal{O}_\kappa \left( \kappa^{-1} \right) \quad \text{for any} \quad p<0
    \end{align*}
and
    \begin{align*}
        \left\lvert \left\langle \begin{pmatrix}
            {e_{i_1,k_1}^{(p)}}^* \\ 0
        \end{pmatrix}, V_\infty^{\text{b}} \begin{pmatrix}
            {e_{i_2,k_2}^{(p)}}^* \\ 0
        \end{pmatrix} \right\rangle_{\mathcal{H}_1\times \mathcal{H}_0} \right\rvert
        = \mathcal{O}_p \left( \left\lvert \overline{\lambda_{i_1}^{(p)}}+\lambda_{i_2}^{(p)} \right\rvert^{-(k_1+k_2-1)} \right) \quad \text{for any} \quad \kappa>0
    \end{align*}
hold for any $i_1,i_2\in\mathbb{N}_{>0}$, $k_1\in \left\{1,\dots,M_{i_1} \right\}$ and $k_2\in \left\{1,\dots,M_{i_2} \right\}$.
\end{thm}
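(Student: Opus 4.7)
The plan is to mirror the strategy of Theorem~\ref{thm:1}, lifting the boundary-forced SPDE~\eqref{eq:case_3} together with the Ornstein--Uhlenbeck process $\xi_0$ to a single Markovian system on $\mathcal{H}_1 \times \mathcal{H}_0$. The new ingredient is that the noise now enters $u^{\text{b}}$ only through the (unbounded) composition $(A_0(p) - q) D(p):\mathcal{H}_0 \to \mathcal{H}_1$, so the effective noise covariance becomes $\Lambda(p)$ from~\eqref{eq:def_Lambda} in place of $Q_1$. Because $\Lambda(p)$ is merely non-negative and is not assumed to be bounded below along the critical generalised eigendirections, the sharp $\Theta$ law of Theorem~\ref{thm:1} is relaxed to a one-sided $\mathcal{O}$ upper bound.

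First, I would introduce the block generator
\begin{align*}
    B_0(p) = \begin{pmatrix} A_0(p) & \sigma_{\text{R}} (A_0(p) - q) D(p) \\ 0 & -\kappa \end{pmatrix}
\end{align*}
on $\mathcal{H}_1 \times \mathcal{H}_0$ and verify via variation of constants that the off-diagonal entry of its $C_0$-semigroup equals $\sigma_{\text{R}}\left(\text{e}^{A_0(p) t} - \text{e}^{-\kappa t}\right) \text{R}(A_0(p)+\kappa)(A_0(p) - q) D(p)$, while the diagonal entries are $\text{e}^{A_0(p) t}$ and $\text{e}^{-\kappa t}$. This recovers precisely the mild solution formula stated in case~(c).

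Next, I would apply It\^o's isometry, exactly as in Theorem~\ref{thm:1}, to the noise matrix $\text{diag}(0, \sigma^2 Q_0)$, obtaining for the top-left block of $V_\infty^{\text{b}}$ the expression
\begin{align*}
    \sigma^2 \sigma_{\text{R}}^2 \int_0^\infty \bigl(\text{e}^{A_0(p) t} - \text{e}^{-\kappa t}\bigr) \text{R}(A_0(p)+\kappa) \Lambda(p) \text{R}(A_0(p)^*+\kappa) \bigl(\text{e}^{A_0(p)^* t} - \text{e}^{-\kappa t}\bigr) \, \text{d}t ,
\end{align*}
where the sandwich $(A_0(p) - q) D(p) Q_0 D(p)^* (A_0(p)^* - q)$ collapses to $\Lambda(p)$ by~\eqref{eq:def_Lambda}. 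Pairing this operator against $({e_{i_1,k_1}^{(p)}}^*, 0)$ and $({e_{i_2,k_2}^{(p)}}^*, 0)$ and expanding $\text{e}^{A_0(p)^* t}{e_{i,k}^{(p)}}^*$ through the Jordan decomposition used in the proof of Theorem~\ref{thm:1} produces a formula structurally identical to~\eqref{eq:main_formula_a}, with every inner product $\langle \mu_{i_1,j_1}^{(p,\kappa)}, Q_1 \mu_{i_2,j_2}^{(p,\kappa)}\rangle$ replaced by $\langle \mu_{i_1,j_1}^{(p,\kappa)}, \Lambda(p) \mu_{i_2,j_2}^{(p,\kappa)}\rangle$. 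Isolating the dominant contribution in each limit yields the prefactor $(2\kappa)^{-1}$ as $\kappa \to 0^+$ and the factor $\bigl|\overline{\lambda_{i_1}^{(p)}} + \lambda_{i_2}^{(p)}\bigr|^{-(k_1+k_2-1)}$ as $p \to 0^-$.

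The uniform boundedness of $D(p)^*$ assumed in case~(c) gives a uniform bound on the pairings $\langle \mu_{i_1,j_1}^{(p,\kappa)}, \Lambda(p) \mu_{i_2,j_2}^{(p,\kappa)}\rangle$, from which the stated $\mathcal{O}$ estimates follow by collecting the dominant terms. The principal obstacle I anticipate is the domain bookkeeping required to justify the semigroup computation and the subsequent It\^o isometry: both $D(p)$ and $\text{R}(A_0(p)+\kappa)$ are only densely defined, and their composition with $\text{e}^{A_0(p) t}$ must be handled inside the mild-solution framework of~\cite{da1993evolution,Bernuzzi2024EWSSPDEBoundary}. Finally, the reason the conclusion degrades to $\mathcal{O}$ rather than $\Theta$ is precisely that $\Lambda(p)$, unlike $Q_1$, can have a nontrivial kernel or arbitrarily small values along the $\mu_{1,k}^{(p,\kappa)}$ directions; a matching lower bound would require additional non-degeneracy of the boundary data that is not assumed here.
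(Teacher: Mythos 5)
Your proposal follows the paper's proof essentially verbatim: the same block generator $B_0(p)$ on $\mathcal{H}_1\times\mathcal{H}_0$, the same semigroup and It\^o-isometry computation collapsing the noise sandwich to $\Lambda(p)$, the same Jordan-block expansion yielding the analogue of \eqref{eq:main_formula_a} with $Q_1$ replaced by $\Lambda(p)$, and the same use of the uniform boundedness of $D(p)^*$ to bound the pairings $\left\langle \mu_{i_1,j_1}^{(p,\kappa)}, \Lambda(p)\, \mu_{i_2,j_2}^{(p,\kappa)}\right\rangle$ and hence obtain only $\mathcal{O}$ rather than $\Theta$ laws. Your closing remark correctly identifies the possible degeneracy of $\Lambda(p)$ as the reason the bounds are one-sided, which is exactly the point the paper makes after the theorem.
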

\begin{proof}
We define the operator
    \begin{align} \label{eq:B0_c}
        B_0(p)=\begin{pmatrix}
            A_0(p) & \sigma_{\text{R}} \left(A_0(p)- q\right) D(p) \\ 0 & -\kappa
        \end{pmatrix}
    \end{align}
and its adjoint with respect to $\mathcal{H}_1\times\mathcal{H}_0$
    \begin{align*}
        B_0(p)^*=\begin{pmatrix}
            A_0(p)^* & 0 \\ \sigma_{\text{R}} D(p)^* \left(A_0(p)^*- q\right) & -\kappa
        \end{pmatrix} .
    \end{align*}
Since the second term in the diagonal of $B_0(p)$ is a multiplication operator by a scalar, they generate the $C_0$-semigroups
    \begin{align*}
        \text{e}^{B_0(p) t}=\begin{pmatrix}
            \text{e}^{A_0(p) t} & \sigma_{\text{R}} \left( \text{e}^{A_0(p) t} - \text{e}^{-\kappa t} \right) \operatorname{R}\left(A_0(p)+\kappa\right) \left(A_0(p)- q\right) D(p) \\ 0 & \text{e}^{-\kappa t}
        \end{pmatrix}
    \end{align*}
and
    \begin{align*}
        {\text{e}^{B_0(p) t}}^* = \text{e}^{B_0(p)^* t}=\begin{pmatrix}
            \text{e}^{A_0(p)^* t} & 0 \\ \sigma_{\text{R}} D(p)^* \left(A_0(p)^*- q\right) \operatorname{R}\left(A_0(p)^*+\kappa\right) \left( \text{e}^{A_0(p)^* t} - \text{e}^{-\kappa t} \right) & \text{e}^{-\kappa t}
        \end{pmatrix}
    \end{align*}
for $t>0$, respectively. The time-asymptotic variance operator is then
    \begin{align*}
        V_\infty^{\text{b}}
        &=\int_0^\infty \text{e}^{B_0(p) t} \begin{pmatrix}
            0 & 0 \\ 0 & \sigma^2 Q_0
        \end{pmatrix} {\text{e}^{B_0(p) t}}^* \text{d}t
    \end{align*}
and the first element in the integrand corresponds to
    \begin{align*}
        &\sigma^2 \sigma_{\text{R}}^2
        \left( \text{e}^{A_0(p) t} - \text{e}^{-\kappa t} \right) \operatorname{R}\left(A_0(p)+\kappa\right) \left(A_0(p)- q\right) D(p)
        Q_0
        D(p)^* \left(A_0(p)^*- q\right) \operatorname{R}\left(A_0(p)^*+\kappa\right) \left( \text{e}^{A_0(p)^* t} - \text{e}^{-\kappa t} \right) \\
        =&\sigma^2 \sigma_{\text{R}}^2
        \left( \text{e}^{A_0(p) t} - \text{e}^{-\kappa t} \right) \operatorname{R}\left(A_0(p)+\kappa\right) \Lambda(p) \operatorname{R}\left(A_0(p)^*+\kappa\right) \left( \text{e}^{A_0(p)^* t} - \text{e}^{-\kappa t} \right),
    \end{align*}
    for $\Lambda(p)$ defined in \eqref{eq:def_Lambda}.
Following equivalent steps to \eqref{eq:main_formula_a} we obtain
    \begin{align} \label{eq:main_formula_c}
        &\left\langle \begin{pmatrix}
            {e_{i_1,k_1}^{(p)}}^* \\ 0
        \end{pmatrix}, V_\infty^{\text{b}} \begin{pmatrix}
            {e_{i_2,k_2}^{(p)}}^* \\ 0
        \end{pmatrix} \right\rangle_{\mathcal{H}_1\times \mathcal{H}_0} \nonumber\\
        = &\sigma^2 \sigma_{\text{R}}^2 \int_0^\infty \Bigg\langle \text{e}^{\overline{\lambda_{i_1}^{(p)}} t} \sum_{j_1=1}^{k_1} \frac{t^{k_1-j_1}}{(k_1-j_1)!} \mu_{i_1,j_1}^{(p,\kappa)} 
        - \text{e}^{-\kappa t} \mu_{i_1,k_1}^{(p,\kappa)}, 
         \Lambda(p) \Bigg( \text{e}^{\overline{\lambda_{i_2}^{(p)}} t} \sum_{j_2=1}^{k_2} \frac{t^{k_2-j_2}}{(k_2-j_2)!} \mu_{i_2,j_2}^{(p,\kappa)}
        - \text{e}^{-\kappa t} \mu_{i_2,k_2}^{(p,\kappa)} \Bigg) \Bigg\rangle \text{d}t \\
        = &\sigma^2 \sigma_{\text{R}}^2 \Bigg( \sum_{j_1=1}^{k_1} \sum_{j_2=1}^{k_2} \begin{pmatrix}
            k_1-j_1+k_2-j_2 \\ k_1-j_1
        \end{pmatrix} \left(-\overline{\lambda_{i_1}^{(p)}}-\lambda_{i_2}^{(p)}\right)^{-k_1+j_1-k_2+j_2-1}
        \left\langle \mu_{i_1,j_1}^{(p,\kappa)} ,
        \Lambda(p) \mu_{i_2,j_2}^{(p,\kappa)} \right\rangle \nonumber\\
        &- \sum_{j_2=1}^{k_2} \left(-\lambda_{i_2}^{(p)}+\kappa\right)^{-k_2+j_2-1}
        \left\langle \mu_{i_1,k_1}^{(p,\kappa)} , 
        \Lambda(p) \mu_{i_2,j_2}^{(p,\kappa)} \right\rangle
        - \sum_{j_1=1}^{k_1} \left(-\overline{\lambda_{i_1}^{(p)}}+\kappa\right)^{-k_1+j_1-1}
        \left\langle \mu_{i_1,j_1}^{(p,\kappa)} ,
        \Lambda(p) \mu_{i_2,k_2}^{(p,\kappa)} \right\rangle
        + (2 \kappa)^{-1} \left\langle \mu_{i_1,k_1}^{(p,\kappa)} , 
        \Lambda(p) \mu_{i_2,k_2}^{(p,\kappa)} \right\rangle \Bigg) . \nonumber
    \end{align}
We notice that
\begin{align*}
    \left\lvert \lambda_{i}^{(p)} + \kappa \right\rvert = \Theta(1) 
    \qquad \text{and} \qquad
    \left\lvert \lambda_{i}^{(p)} - \kappa \right\rvert = \Theta(1) 
\end{align*}
for any $i\in\mathbb{N}_{>0}$. Furthermore,
\begin{align*}
    \left\lvert \overline{\lambda_{i_1}^{(p)}} + \lambda_{i_2}^{(p)} \right\rvert = \Theta(1) 
\end{align*}
for any $(i_1,i_2)\in\mathbb{N}_{>0}\times\mathbb{N}_{>0}\setminus\{(1,1)\}$. Lastly, since the functions $\mu_{i,k}^{(p,\kappa)}$ are finite combinations of generalized eigenfunctions of $A_0(p)^*$, the property
    \begin{align*}
        A_0(p)^* \mu_{i,j}^{(p,\kappa)} = \overline{\lambda_i^{(p)}} \mu_{i,k}^{(p,\kappa)} - \left(\overline{\lambda_i^{(p)}} + \kappa \right)^{-1} \mu_{i,k-1}^{(p,\kappa)} ,
    \end{align*}
for any $i\in\mathbb{N}_{>0}$, $k\in\{1,\dots,M_i\}$, $p<0$ and $\kappa>0$, and from the uniform boundedness of $D(p)^*$, we obtain that 
\begin{align*}
    \left\langle \mu_{i_1,k_1}^{(p,\kappa)} , 
    \Lambda(p) \mu_{i_2,k_2}^{(p,\kappa)} \right\rangle = \mathcal{O}(1) ,
\end{align*}
for any $(i_1,i_2)\in\mathbb{N}_{>0}\times\mathbb{N}_{>0}\setminus\{(1,1)\}$, $k_1\in\{1,\dots,M_{i_1}\}$ and  $k_2\in\{1,\dots,M_{i_2}\}$.
The EWS are consequently defined by the scaling laws of the observable in \eqref{eq:main_formula_c}. The rate of divergence in $\kappa\to 0^+$ is implied by the last term in \eqref{eq:main_formula_c}; whereas for the limit $p\to 0^-$ it is induced by the behaviour of the term in the first sum in the righthand-side of \eqref{eq:main_formula_c} corresponding to $j_1=j_2=1$. As such, the theorem is proven.
\end{proof}
Theorem \ref{thm:5} describes the rates of the time-asymptotic variance operator along chosen modes. The next corollary extends the use of such an EWS to a larger set of functions much like Corollary \ref{cor:2}. Due to the completeness of the generalized eigenfunctions of $A_0(p)^*$ in $\mathcal{H}_1$ for any $p\leq 0$, such a set is dense in $\mathcal{H}_1$. In this sense, a generic choice of probing function will exhibit the EWS of rising variance also in this case of red noise on the boundary, whose highest possible rate of divergence depends on the boundary conditions of the model. Since the computation of the corresponding map $D(p)$ is not trivial except for particular examples, the degeneracy of the noise \cite{Bernuzzi2024Large, Bernuzzi2024EWSSPDEBoundary} could hinder observation of the stochastic perturbations. As such, an exact scaling law can not be captured without enforcing further assumptions on the boundary map.

\begin{cor} \label{cor:6}
We consider $u^{\text{b}}=u^{\text{b}}(x,t)$, mild solution of
    \begin{align*}
        \left\{ \begin{alignedat}{2}
            \text{d}u^{\text{b}}(x,t) &= A(p) u^{\text{b}}(x,t) \text{d}t , \\
            \gamma(p) u^{\text{b}}(x,t) &= \sigma_{\text{R}} \xi_0(x,t) , \\ 
            \text{d}\xi_0(x,t) &= -\kappa \xi_0(x,t) + \sigma Q_0^\frac{1}{2} \text{d}W_t^0 ,
        \end{alignedat} \right.
    \end{align*}
with initial conditions in $\mathcal{H}_1$, $x\in\mathcal{X}_1$, $p<0$ and $t>0$. For $M\in\mathbb{N}_{>0}$, we set $h_1^{(p)},h_2^{(p)}\in\underset{i=1}{\overset{M}{\bigoplus}} E_i(p)^* \setminus \underset{i=2}{\overset{M}{\bigoplus}} E_i(p)^*\subset \mathcal{H}_1$ such that
\begin{align} \label{eq:condition_cor_3}
    a_{1,M_1,1}:=\left\langle h_1^{(p)}, e_{1,1}^{(p)} \right\rangle \neq 0 \neq \left\langle h_2^{(p)}, e_{1,1}^{(p)} \right\rangle=:a_{1,M_1,2}
\end{align}
for any $p\leq 0$. Then,
    \begin{align*}
        \left\lvert \left\langle \begin{pmatrix}
            h_1^{(p)} \\ 0
        \end{pmatrix}, V_\infty^{\text{b}} \begin{pmatrix}
            h_2^{(p)} \\ 0
        \end{pmatrix} \right\rangle_{\mathcal{H}_1\times \mathcal{H}_0} \right\rvert
        &= \mathcal{O}_\kappa \left( \kappa^{-1} \right) 
        \quad \text{for any} \quad p<0
    \end{align*}
    and
    \begin{align*}
        \left\lvert \left\langle \begin{pmatrix}
            h_1^{(p)} \\ 0
        \end{pmatrix}, V_\infty^{\text{b}} \begin{pmatrix}
            h_2^{(p)} \\ 0
        \end{pmatrix} \right\rangle_{\mathcal{H}_1\times \mathcal{H}_0} \right\rvert
        &= \mathcal{O}_p \left( \text{Re} \left( - \lambda_{1}^{(p)} \right)^{-(2 M_1-1)} \right) 
        \quad \text{for any} \quad \kappa>0
    \end{align*}
hold.
\end{cor}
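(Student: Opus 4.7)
The plan is to mirror closely the structure of the proof of Corollary \ref{cor:2}, but now invoking Theorem \ref{thm:5} and its $\mathcal{O}$-type scaling laws in place of the $\Theta$-type laws used for the pure-domain-noise case. Specifically, I would first exploit the biorthogonality of the systems $\{e_{i,k}^{(p)}\}$ and $\{{e_{i,M_i-k+1}^{(p)}}^*\}$ together with the completeness of the generalized eigenfunctions of $A_0(p)^*$ in $\mathcal{H}_1$ to expand
\begin{align*}
    h_1^{(p)} = \underset{k\in\{1,\dots,M_i\}}{\sum_{i\in\{1,\dots,M\}}} a_{i,k,1}^{(p)} {e_{i,k}^{(p)}}^*,
    \qquad
    h_2^{(p)} = \underset{k\in\{1,\dots,M_i\}}{\sum_{i\in\{1,\dots,M\}}} a_{i,k,2}^{(p)} {e_{i,k}^{(p)}}^*,
\end{align*}
with coefficients $a_{i,k,j}^{(p)}\in\mathbb{C}$ that are continuous in $p$. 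The biorthogonality in particular yields that $a_{1,M_1,j}^{(p)}$ equals the pairing $\langle h_j^{(p)},e_{1,1}^{(p)}\rangle$ appearing in the non-degeneracy hypothesis \eqref{eq:condition_cor_3}, which will identify the coefficient driving the leading-order contribution.

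Next, I would use sesquilinearity of $V_\infty^{\text{b}}$ together with the decompositions above to write
\begin{align*}
    \left\langle \begin{pmatrix} h_1^{(p)} \\ 0 \end{pmatrix}, V_\infty^{\text{b}} \begin{pmatrix} h_2^{(p)} \\ 0 \end{pmatrix} \right\rangle_{\mathcal{H}_1\times \mathcal{H}_0}
    = \underset{k_1,k_2}{\sum_{i_1,i_2}} a_{i_1,k_1,1}^{(p)} \overline{a_{i_2,k_2,2}^{(p)}}
    \left\langle \begin{pmatrix} {e_{i_1,k_1}^{(p)}}^* \\ 0 \end{pmatrix}, V_\infty^{\text{b}} \begin{pmatrix} {e_{i_2,k_2}^{(p)}}^* \\ 0 \end{pmatrix} \right\rangle_{\mathcal{H}_1\times \mathcal{H}_0},
\end{align*}
as a finite sum of at most $(\sum_{i=1}^M M_i)^2$ terms. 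Applying Theorem \ref{thm:5} to each such term gives, for the limit $\kappa\to 0^+$, a bound of $\mathcal{O}_\kappa(\kappa^{-1})$ uniformly in the finitely many indices, and since the finite sum of such bounds remains $\mathcal{O}_\kappa(\kappa^{-1})$, the first assertion of the corollary follows immediately.

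For the limit $p\to 0^-$, Theorem \ref{thm:5} yields
\begin{align*}
    \left|\left\langle \begin{pmatrix} {e_{i_1,k_1}^{(p)}}^* \\ 0 \end{pmatrix}, V_\infty^{\text{b}} \begin{pmatrix} {e_{i_2,k_2}^{(p)}}^* \\ 0 \end{pmatrix} \right\rangle_{\mathcal{H}_1\times\mathcal{H}_0}\right|
    = \mathcal{O}_p\!\left(\left|\overline{\lambda_{i_1}^{(p)}}+\lambda_{i_2}^{(p)}\right|^{-(k_1+k_2-1)}\right).
\end{align*}
Since only the pair $(i_1,i_2)=(1,1)$ can make this factor blow up, and among those the worst rate is achieved at $k_1=k_2=M_1$, yielding the expression $\mathrm{Re}(-\lambda_1^{(p)})^{-(2M_1-1)}$, every summand is bounded (up to a constant depending on the coefficients $a_{i,k,j}^{(p)}$, which are continuous and hence locally bounded in $p$) by this quantity. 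Summing and invoking triangle inequality produces the claimed $\mathcal{O}_p$ rate, where the hypothesis \eqref{eq:condition_cor_3} ensures that the leading-order coefficient is actually present in the expansion, making the bound sharp in the appropriate sense given the $\mathcal{O}$-character inherited from the boundary case.

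The main obstacle, compared with Corollary \ref{cor:2}, is precisely that Theorem \ref{thm:5} only yields $\mathcal{O}$-bounds rather than $\Theta$-bounds, as a consequence of the possible degeneracy of the boundary operator $D(p)^*$ acting on $\Lambda(p)$: the inner products $\langle \mu_{i_1,k_1}^{(p,\kappa)}, \Lambda(p)\mu_{i_2,k_2}^{(p,\kappa)}\rangle$ appearing in \eqref{eq:main_formula_c} need not be bounded away from zero. Consequently, one cannot in general exclude cancellations among the finitely many summands from below, and it is not possible to upgrade the conclusion to a $\Theta_p$ statement without further structural assumptions on $\gamma(p)$; this is why the corollary's conclusion is formulated only as an upper scaling rate.
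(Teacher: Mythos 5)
Your proposal is correct and follows essentially the same route as the paper's proof: expand $h_1^{(p)},h_2^{(p)}$ in the biorthogonal system $\{{e_{i,k}^{(p)}}^*\}$, use sesquilinearity of $V_\infty^{\text{b}}$ to reduce to the termwise scaling laws of Theorem \ref{thm:5}, identify $(i_1,i_2)=(1,1)$, $k_1=k_2=M_1$ as the dominant contribution, and observe that the possible degeneracy of $\langle \mu_{i_1,k_1}^{(p,\kappa)},\Lambda(p)\mu_{i_2,k_2}^{(p,\kappa)}\rangle$ forces the conclusion to remain at the $\mathcal{O}$ level. The only cosmetic difference is that the paper first regroups the $(2\kappa)^{-1}$-terms into a single inner product $\langle \sum a\mu,\Lambda(p)\sum a\mu\rangle$ before bounding, whereas you apply the triangle inequality termwise; both yield the stated $\mathcal{O}_\kappa(\kappa^{-1})$ and $\mathcal{O}_p\left(\operatorname{Re}\left(-\lambda_1^{(p)}\right)^{-(2M_1-1)}\right)$ rates.
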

\begin{proof}
We set the families $\left\{ a_{i,k,1}^{(p)} \right\}\subset\mathbb{C}$ and $\left\{ a_{i,k,2}^{(p)} \right\}\subset\mathbb{C}$ for $i\in\{1,\dots,M\}$ and $k\in\{1,\dots,M_i\}$, such that
\begin{align*}
    h_1^{(p)} = \underset{k\in\{1,\dots,M_i\}}{\sum_{i\in\{1,\dots,M\}}} a_{i,k,1}^{(p)} {e_{i,k}^{(p)}}^*
    \qquad \text{and} \qquad
    h_2^{(p)} = \underset{k\in\{1,\dots,M_i\}}{\sum_{i\in\{1,\dots,M\}}} a_{i,k,2}^{(p)} {e_{i,k}^{(p)}}^* ,
\end{align*}
for any $p\leq 0$. We obtain then
\begin{align*}
    \left\langle \begin{pmatrix}
            h_1^{(p)} \\ 0
        \end{pmatrix}, V_\infty^{\text{b}} \begin{pmatrix}
            h_2^{(p)} \\ 0
        \end{pmatrix} \right\rangle_{\mathcal{H}_1\times \mathcal{H}_0}
    = \underset{k_1\in\{1,\dots,M_{i_1}\}}{\sum_{i_1\in\{1,\dots,M\}}} 
    \underset{k_2\in\{1,\dots,M_{i_2}\}}{\sum_{i_2\in\{1,\dots,M\}}}
    a_{i_1,k_1,1}^{(p)}
    \overline{a_{i_2,k_2,2}^{(p)}}
    \left\langle \begin{pmatrix}
        {e_{i_1,k_1}^{(p)}}^* \\ 0
    \end{pmatrix}, V_\infty^{\text{b}} \begin{pmatrix}
        {e_{i_2,k_2}^{(p)}}^* \\ 0
    \end{pmatrix} \right\rangle_{\mathcal{H}_1\times \mathcal{H}_0} .
\end{align*}
The form \eqref{eq:main_formula_c} in Theorem \ref{thm:5} entails that
\begin{align*}
    \left\lvert \left\langle \begin{pmatrix}
            h_1^{(p)} \\ 0
        \end{pmatrix}, V_\infty^{\text{b}} \begin{pmatrix}
            h_2^{(p)} \\ 0
        \end{pmatrix} \right\rangle_{\mathcal{H}_1\times \mathcal{H}_0} \right\rvert
    &= \Theta_\kappa \left( \left\lvert (2 \kappa)^{-1} \underset{k_1\in\{1,\dots,M_{i_1}\}}{\sum_{i_1\in\{1,\dots,M\}}} 
    \underset{k_2\in\{1,\dots,M_{i_2}\}}{\sum_{i_2\in\{1,\dots,M\}}} 
    a_{i_1,k_1,1}^{(p)}
    \overline{a_{i_2,k_2,2}^{(p)}} 
    \left\langle \mu_{i_1,k_1}^{(p,\kappa)} , 
    \Lambda(p) \mu_{i_2,k_2}^{(p,\kappa)} \right\rangle \right\rvert \right)\\
    &= \Theta_\kappa \left( \kappa^{-1} \left\lvert \left\langle \underset{k_1\in\{1,\dots,M_{i_1}\}}{\sum_{i_1\in\{1,\dots,M\}}} a_{i_1,k_1,1}^{(p)} \mu_{i_1,k_1}^{(p,\kappa)} , 
    \Lambda(p) \underset{k_2\in\{1,\dots,M_{i_2}\}}{\sum_{i_2\in\{1,\dots,M\}}} a_{i_2,k_2,2}^{(p)} \mu_{i_2,k_2}^{(p,\kappa)} \right\rangle \right\rvert \right)\\
    &= \mathcal{O}_\kappa \left( \kappa^{-1} \right) .
\end{align*}
In the limit $p\to 0^-$, the variance
    \begin{align*}
        \left\lvert \left\langle \begin{pmatrix}
        {e_{i_1,k_1}^{(p)}}^* \\ 0
        \end{pmatrix}, V_\infty^{\text{b}} \begin{pmatrix}
        {e_{i_2,k_2}^{(p)}}^* \\ 0
        \end{pmatrix} \right\rangle_{\mathcal{H}_1\times \mathcal{H}_0} \right\rvert
        &= \mathcal{O}_p \left( \left\lvert \overline{\lambda_{i_1}^{(p)}}+\lambda_{i_2}^{(p)} \right\rvert^{-(k_1+k_2-1)} \right)
    \end{align*}
displays divergence only for the indexes $i_1=i_2=1$. Moreover, its rate of divergence is associated with the values $k_1,k_2\in\{1,\dots,M_1\}$. Consequently, equation \eqref{eq:main_formula_c} and condition \eqref{eq:condition_cor_3} imply that
\begin{align*}
    \left\lvert \left\langle \begin{pmatrix}
            h_1^{(p)} \\ 0
        \end{pmatrix}, V_\infty^{\text{b}} \begin{pmatrix}
            h_2^{(p)} \\ 0
        \end{pmatrix} \right\rangle_{\mathcal{H}_1\times \mathcal{H}_0} \right\rvert
    &= \Theta_p \left( \left\lvert \underset{k_1\in\{1,\dots,M_{i_1}\}}{\sum_{i_1\in\{1,\dots,M\}}} 
    \underset{k_2\in\{1,\dots,M_{i_2}\}}{\sum_{i_2\in\{1,\dots,M\}}} 
    a_{i_1,k_1,1}^{(p)}
    \overline{a_{i_2,k_2,2}^{(p)}} 
    \left\langle \begin{pmatrix}
        {e_{i_1,k_1}^{(p)}}^* \\ 0
    \end{pmatrix}, V_\infty^{\text{b}} \begin{pmatrix}
        {e_{i_2,k_2}^{(p)}}^* \\ 0
    \end{pmatrix} \right\rangle_{\mathcal{H}_1\times \mathcal{H}_0} \right\rvert \right) \\
    &= \Theta_p \left( \left\lvert 
    \sum_{k_1=1}^{M_1} 
    \sum_{k_2=1}^{M_1} 
    a_{1,k_1,1}^{(p)}
    \overline{a_{1,k_2,2}^{(p)}} 
    \left\langle \begin{pmatrix}
        {e_{1,k_1}^{(p)}}^* \\ 0
    \end{pmatrix}, V_\infty^{\text{b}} \begin{pmatrix}
        {e_{1,k_2}^{(p)}}^* \\ 0
    \end{pmatrix} \right\rangle_{\mathcal{H}_1\times \mathcal{H}_0} \right\rvert \right) \\
    &= \mathcal{O}_p \left( \left\lvert 
    \sum_{k_1=1}^{M_1} 
    \sum_{k_2=1}^{M_1} 
    a_{1,k_1,1}^{(p)}
    \overline{a_{1,k_2,2}^{(p)}} 
    \text{Re} \left( - \lambda_{1}^{(p)} \right)^{-(k_1+k_2-1)} \right\rvert \right) \\
    &= \mathcal{O}_p \left(  
    \text{Re} \left( - \lambda_{1}^{(p)} \right)^{-(2 M_1 -1)} \right) .
\end{align*}
\end{proof}

\subsection{Considerations on scaling laws and autocorrelation as an EWS}

In the previous subsections, the scaling laws of the time-asymptotic variance along various modes are considered for $p\to 0^-$ and $\kappa\to 0^+$. In such limits, the EWS are shown to display different behaviours depending on the assumption of the spectrum of the linear drift operator in the SPDE that defines $u$. As a concrete example, if the boundary noise does not perturb the solution along the critical mode, the system may not exhibit CSD. The correlation of $u$ and $\xi_j$ for $j\in\{0,1\}$ and, in the case of boundary noise, the degeneracy of the stochastic component imply that the effect of the stochastic perturbation on $\xi_j$ on $u$ is not trivial. Fortunately, the simple form of the linear operator $B_0(p)$ in \eqref{eq:B0_a}, \eqref{eq:B0_b} and \eqref{eq:B0_c} enables the study of the problem along its eigenmodes. We consider first the limit $p\to 0^-$. In \eqref{eq:case_1}, the spectrum of $A_0(p)$ is discrete for any $p\leq 0$ and the EWS displays hyperbolic rate of divergence along the sensible eigenfunction ${e_1^{(p)}}^*$ of its adjoint in Theorem \ref{thm:1}. Furthermore, the EWS along the sensible generalized eigenfunctions ${e_{1,k}^{(p)}}^*$ of $A_0(p)^*$ indicate a faster scaling law depending on their rank $k$. Through the biorthogonality of the generalized eigenfunctions of $A_0(p)$ and $A_0(p)^*$ we know that the the projection of $g\in\mathcal{H}_1$ on ${e_{1,k}^{(p)}}^*$ is equivalent to the coefficient of $g$ on the $e_{1,M_1-k+1}^{(p)}$, for any $k\in \{1,\dots,M_1\}$. This is intended in the sense that for $M\in\mathbb{N}_{>0}$ and
\begin{align*}
    g = \sum_{i=1}^M \sum_{k=1}^{M_i} c_{i,k} e_{i,k}^{(p)} ,
\end{align*}
then it holds
\begin{align*}
    \left\langle g, {e_{i,k}^{(p)}}^* \right\rangle = c_{i,M_i-k+1}
\end{align*}
for any $i\in\{1,\dots,M\}$, $k\in\{1,\dots,M_i\}$ and $p\leq 0$.
As a result, the time-asymptotic covariance along ${e_{1,k_1}^{(p)}}^*$ and ${e_{1,k_2}^{(p)}}^*$ refers to the time-asymptotic covariance of the oscillations of the coefficients of $u^{\text{d}}$, the solution of \eqref{eq:case_1}, along $e_{1,M_1-k_1+1}^{(p)}$ and $e_{1,M_1-k_2+1}^{(p)}$. This observable collects also the oscillations along $e_{1,j_1}^{(p)}$ and $e_{1,j_2}^{(p)}$ for $j_1\in \{M_1-k_1+2,\dots,M_1\}$ and $j_2\in \{M_1-k_2+2,\dots,M_1\}$. This is implied by the fact that oscillations along $e_{1,k}^{(p)}$ imply further perturbations along the mode $e_{1,k-1}^{(p)}$ for any $k\in \{2,\dots,M_1\}$, as shown in \eqref{eq:def_gen_eig}.

The limit in \eqref{eq:case_2} is different in nature from the previous case. The absence of eigenfunctions entails that there are no preferred directions along which the EWS captures the bifurcation. As shown in Corollary \ref{cor:4}, the shape of the spectrum can dampen the EWS and hinder the scaling law of the time-asymptotic variance. Furthermore, for a non-differentiable $f$, the EWS is silenced or assumes a logarithmic rate of divergence. In applications, the first case corresponds to the crossing of the bifurcation threshold being unnoticed by the EWS.

The system \eqref{eq:case_3} displays a similar scaling law to \eqref{eq:case_1}. Nonetheless, the noise perturbation is filtered by $(A_0-q) D(p)$, an operator often unknown in applications. As such, its dependence on $p$ can affect the scaling law of the EWS and hinder the divergence of the observable. Conversely, the operator $D(p)$ is known for simple models \cite{Bernuzzi2024EWSSPDEBoundary,da1993evolution} and may not be dependent on $p$. A related example is described in the section to follow.

The limit $\kappa \to 0^+$ implies the (at most) hyperbolic divergence of the EWS in \eqref{eq:case_1}, \eqref{eq:case_2} and \eqref{eq:case_3}. Such a behaviour is entailed by the structure of  $B_0(p)$ in \eqref{eq:B0_a}, \eqref{eq:B0_b} and \eqref{eq:B0_c}, respectively. While in the limit $p\to 0^-$, at most only one eigenvalue tends to the imaginary axis, in this case, an infinite number of real eigenvalues tend simultaneously to $0^-$ along an equivalent number of eigenfunctions. The assumptions of $Q_1$, or $\Lambda(p)$ in the case of \eqref{eq:case_3}, imply that the corresponding scaling law is captured along a large set of modes in $\mathcal{H}_1$.

The resemblance of the scaling laws in the limit $p\to 0^-$ of the time-asymptotic variance of the models \eqref{eq:case_1}, \eqref{eq:case_2} and \eqref{eq:case_3} compared to the corresponding deterministic models perturbed by white noise \cite{Bernuzzi2024EWSSPDEBoundary,Bernuzzi2024EWSSPDEContinuousSpectrum} indicates that it should be possible to also consider other observables, in analogy to the situation for finite-dimensional dynamics with white noise. Under the assumption of discrete spectrum, a natural example is the time-asymptotic autocorrelation, which is known to behave as an exponential function if studied along eigenmodes. We consider the case of \eqref{eq:case_1}. Then, we construct the time-asymptotic autocovariance with lag time $\tau>0$ as the operator $V_\infty^\tau$ in $\mathcal{H}_1\times \mathcal{H}_1$ such that
\begin{align*}
    \left\langle \begin{pmatrix}
        v_1 \\ v_2
    \end{pmatrix} , V_\infty^\tau \begin{pmatrix}
        w_1 \\ w_2
    \end{pmatrix} \right\rangle_{\mathcal{H}_1\times\mathcal{H}_1}
    = \underset{t\to\infty}{\text{lim}} \text{Cov} \left( \left\langle u(\cdot,t+\tau), v_1 \right\rangle
    + \left\langle \xi_1(\cdot,t+\tau), v_2 \right\rangle, 
    \left\langle u(\cdot,t), w_1 \right\rangle 
    + \left\langle \xi_1(\cdot,t), w_2 \right\rangle \right) ,
\end{align*}
for any $v_1,v_2,w_1,w_2\in\mathcal{H}_1$. Such an operator satisfies \cite[Lemma 3.1]{Bernuzzi2024EWSSPDEBoundary} the equality
\begin{align} \label{eq:lag_lyap}
    V_\infty^\tau = \text{e}^{B_0(p) \tau} V_\infty.
\end{align}
A standard way to employ the time-asymptotic autocorrelation as an early-warning sign is to consider it as the nonlinear operator
\begin{align*}
    \hat{V}_\infty^\tau \left( 
    \begin{pmatrix}
        v_1 \\ v_2
    \end{pmatrix},
    \begin{pmatrix}
        w_1 \\ w_2
    \end{pmatrix}
    \right)
    = \frac{\left\langle \begin{pmatrix}
        v_1 \\ v_2
    \end{pmatrix} , V_\infty^\tau \begin{pmatrix}
        w_1 \\ w_2
    \end{pmatrix} \right\rangle_{\mathcal{H}_1\times\mathcal{H}_1}}{\left\langle \begin{pmatrix}
        v_1 \\ v_2
    \end{pmatrix} , V_\infty \begin{pmatrix}
        w_1 \\ w_2
    \end{pmatrix} \right\rangle_{\mathcal{H}_1\times\mathcal{H}_1}}
\end{align*}
for any $v_1,v_2,w_1,w_2$ such that $\left\langle \begin{pmatrix}
    v_1 \\ v_2
    \end{pmatrix} , V_\infty \begin{pmatrix}
    w_1 \\ w_2
\end{pmatrix} \right\rangle_{\mathcal{H}_1\times\mathcal{H}_1}\neq 0$. From \eqref{eq:B0_a} we define ${\beta_i^{(p)}}^*\in\mathcal{H}_1\times \mathcal{H}_1$ as the eigenfunction of $B_0(p)^*$ corresponding to the eigenvalue $\overline{\lambda_i^{(p)}}$ for any $i\in\mathbb{N}_{>0}$. It can then be proven from \eqref{eq:lag_lyap} that
\begin{align*}
    \hat{V}_\infty^\tau \left( 
    {\beta_i^{(p)}}^*, w
    \right)
    = \text{e}^{\overline{\lambda_i^{(p)}} \tau}
\end{align*}
holds for any $i\in\mathbb{N}_{>0}$ and $w\in\mathcal{H}_1\times \mathcal{H}_1$. A key consideration of this result is the fact that, while
\begin{align*}
    \beta_i^{(p)}:=
    \begin{pmatrix}
        e_i^{(p)} \\ 0
    \end{pmatrix} \in \mathcal{H}_1\times \mathcal{H}_1
\end{align*}
is the eigenfunction of $B_0(p)$ corresponding to $\lambda_i^{(p)}$, the eigenfunctions
\begin{align*}
    {\beta_i^{(p)}}^*=
    \begin{pmatrix}
        {e_i^{(p)}}^* \\ {d_i^{(p)}}^*
    \end{pmatrix} \in \mathcal{H}_1\times \mathcal{H}_1
\end{align*}
of $B_0(p)^*$ are characterized by ${d_i^{(p)}}^*= \frac{\sigma_\text{R}}{\overline{\lambda_i^{(p)}} + \kappa}{e_i^{(p)}}^* \neq 0$. This is an implication of \eqref{eq:B0_a} and, as a consequence, the sole knowledge of the mild solution $u^\text{d}$ of \eqref{eq:case_1} is not sufficient to describe the autocorrelation as an exponential function to be employed as an EWS in the limit $p\to 0^-$. Then, the structure of the stochastic perturbation $\xi_1$ in \eqref{eq:syst_xi} is required to obtain such a construction. In contrast, the time-asymptotic autocorrelation $\hat{V}_\infty^\tau \left( v, w \right) $ for
\begin{align*}
    v=
    \begin{pmatrix}
        v_1 \\ 0
    \end{pmatrix} \in \mathcal{H}_1\times \mathcal{H}_1,
\end{align*}
any $v_1\in\mathcal{H}_1$ and $w\in\mathcal{H}_1\times \mathcal{H}_1$ displays a more complex structure as a function of $\tau$. This structure can be computed explicitly from \eqref{eq:lag_lyap}, depends on $\kappa$, and is deferred for future studies.
Conversely, any
\begin{align*}
    v=
    \begin{pmatrix}
        0 \\ v_2
    \end{pmatrix} \in \mathcal{H}_1\times \mathcal{H}_1
\end{align*}
for $v_2\in\mathcal{H}_1$ is an eigenfunction of $B_0(p)^*$ with eigenvalue $-\kappa$. Hence,
\begin{align*}
    \hat{V}_\infty^\tau \left( 
    v, w
    \right)
    = \text{e}^{-\kappa \tau}
\end{align*}
holds for any $w\in\mathcal{H}_1\times \mathcal{H}_1$ such that $\left\langle v , V_\infty w \right\rangle_{\mathcal{H}_1\times\mathcal{H}_1}\neq 0$. In conclusion, the knowledge of the behaviour of $\xi_1$ in time is sufficient to construct another (false) EWS in the limit $\kappa\to 0^+$ for any $\tau>0$.

\begin{figure}[h!]
    \centering   
    \subfloat[EWS on \eqref{eq:example_disc} for $\kappa=2$ in the limit $p\to 0^-$.]{\begin{overpic}[width= 0.45\textwidth]{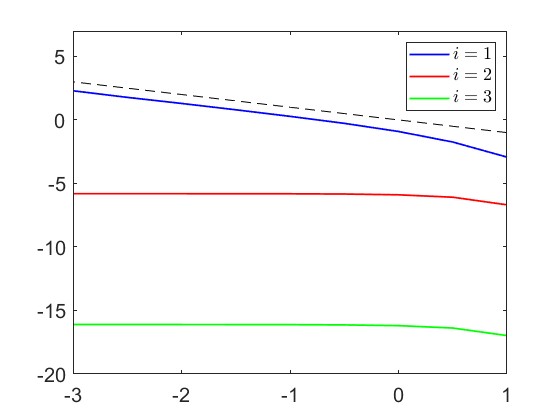}
    \put(470,0){\footnotesize{$\text{log}_{10}(-p)$}}
    \put(0,670){\rotatebox{270}{\parbox{15em}{\footnotesize{Variance along $e_i^{(p)}$ on a $\text{log}_{10}$ scale}}}}
    \end{overpic}
    \label{Fig1_a}}
    \hspace{0mm}
    \subfloat[EWS on \eqref{eq:example_gen} for $\kappa=2$ in the limit $p\to 0^-$.]{\begin{overpic}[width= 0.45\textwidth]{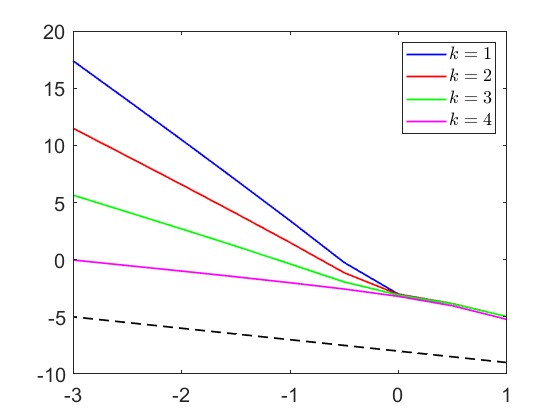}
    \put(470,0){\footnotesize{$\text{log}_{10}(-p)$}}
    \put(0,670){\rotatebox{270}{\parbox{15em}{\footnotesize{Variance along ${e_{1,k}^{(p)}}^*$ on a $\text{log}_{10}$ scale}}}}
    \end{overpic}
    \label{Fig1_b}}

    \vspace{0mm}

    \subfloat[EWS on \eqref{eq:example_cont} for $\kappa=2$ in the limit $p\to 0^-$.]{\begin{overpic}[width= 0.45\textwidth]{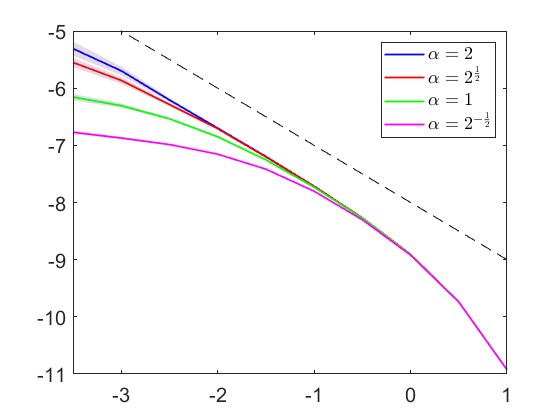}
    \put(470,0){\footnotesize{$\text{log}_{10}(-p)$}}
    \put(0,670){\rotatebox{270}{\parbox{15em}{\footnotesize{Variance along $g$ on a $\text{log}_{10}$ scale}}}}
    \end{overpic}
    \label{Fig1_c}}
    \hspace{0mm}
    \subfloat[EWS on \eqref{eq:example_bound} for $\kappa=2$ in the limit $p\to 0^-$.]{\begin{overpic}[width= 0.45\textwidth]{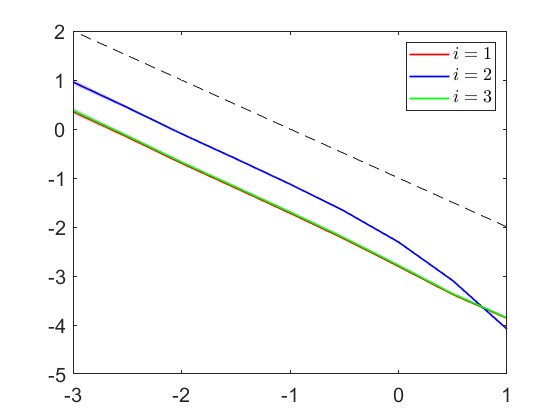}
    \put(470,0){\footnotesize{$\text{log}_{10}(-p)$}}
    \put(0,670){\rotatebox{270}{\parbox{15em}{\footnotesize{Variance along $\mathbbm{1}_{\mathcal{S}_i}$ on a $\text{log}_{10}$ scale}}}}
    \end{overpic}
    \label{Fig1_d}}
    
    \caption{Log-log plots of the variance in time obtained when projecting the SPDE solution along different modes. The limit $p\to 0^-$ is shown from right to left. Each panel corresponds to a different system: $(a)$ the cable equation on an interval with periodic boundary conditions, \eqref{eq:example_disc}; $(b)$ the SDE \eqref{eq:example_gen} with linear drift displaying generalized eigenvectors; $(c)$ the SPDE \eqref{eq:example_cont} with a linear drift term with purely continuous spectrum; $(d)$ the boundary-driven system \eqref{eq:example_bound} with red Dirichlet noise at the extremes of an interval. The values refer to the average of $10$ run samples. The dashed black lines indicate reference hyperbolic scaling laws, whereas the shaded grey regions represent twice the numerical standard deviation. The increase of variance in the projected modes is the manifestation of CSD in these SPDEs, where the drift term approaches a deterministic bifurcation point. Nonetheless, while the scaling law is hyperbolic in $(a)$ and $(d)$, we find examples of enhanced or silenced EWSs in $(b)$ and in $(c)$, respectively.}
    \label{Fig1}
\end{figure}

\section{Numerical Analysis}\label{sec: NumResults}

In this section, we discuss numerical simulations to cross-validate our findings. We numerically solve different types of SPDEs and study the variance of projections along specific modes over a large time interval. Through ergodic theory, such a value approximates the time-asymptotic variance on the corresponding functions in $\mathcal{H}_1$. As such, we substitute the observable with the time variance of the solutions in the time interval $[0,T]$, for $T=10^5$. As discussed in the previous section, we observe different scaling laws in the EWS depending on the assumptions of the systems we consider. Figure \ref{Fig1} and Figure \ref{Fig2} encompass our results by displaying the rate of the observables in log-log plots in the limits $p\to 0^-$ and $\kappa\to 0^+$, respectively. In Figure \ref{Fig1} we fix the value $\kappa=2$, whereas in Figure \ref{Fig2} we consider $p= 0.5$.

In the following examples, the red noise term is the solution of \eqref{eq:syst_xi} for $\sigma=0.1$ and $Q_j=\operatorname{Id}$, the identity operator in $\mathcal{H}_j$ for $j\in\{0,1\}$. The systems are solved through the discretization of the mild solution formula \cite{Bernuzzi2024Large}, unless stated otherwise. The time step is chosen as $\delta t=0.1$ whereas the spatial discretization scale $\delta x$ is fixed in each example. The projections along different modes are computed through the discrete scalar product that approximates the product in $\mathcal{H}_1$. In the corresponding figures, the values are obtained as the average of $10$ independent runs samples, and the initial conditions are set near null functions. The shaded grey areas have a width equal to double the numerical standard deviation in a logarithmic scale to indicate sensibilities in the algorithm. As a reference, we display a dashed black line in each figure that indicates the hyperbolic rate of divergence.

First, we solve the cable equation with periodic boundary conditions. This fundamental reaction-diffusion equation has many modelling applications, e.g., in neuroscience \cite{KOCH1984CableTheoryNeurons, WANG2018ModifiedCableEquation}:
    \begin{align} \label{eq:example_disc}
        \left\{ \begin{alignedat}{2}
            \text{d}u^{\text{d}}(x,t) &= \left( (\Delta + p) u^{\text{d}}(x,t) + \xi_1(x,t) \right) \text{d}t , \\
            u^{\text{d}}(0,t) &= u^{\text{d}}(1,t) ,\\
            u^{\text{d}}(x,0) &= 0 ,
        \end{alignedat} \right.
    \end{align}
for $x\in[0,1]$ and $0<t<T$. The spatial step is $\delta x = 0.005$. We indicate as $\left\{e_i^{(p)}\right\}_{i\in\mathbb{N}_{>0}}$, the eigenfunctions of the selfadjoint differential operator $\Delta+p$ with periodic boundary conditions. We notice then that $\lambda_1^{(p)}=p$. In Figure \ref{Fig1_a}, we display the variance of $\left\langle u^{\text{d}}, f \right\rangle$ for $f=e_i^{(p)}$ and $i\in\{0,1,2\}$ in correspondence to $-p$. Those values are scaled in a logarithmic scale to capture the scaling law in the limit $p\to 0^-$. Of such, only the variance along $f=e_1^{(p)}$ assumes a hyperbolic rate of divergence, while the rest converge in the limit. In contrast, in Figure \ref{Fig2_a}, the scaling law of the variance in the limit $\kappa\to 0^+$ is hyperbolic regardless the eigenmode along which the EWS is observed, since $Q=\operatorname{Id}$. This behaviour is visible through the comparison to the dashed reference line.

In the next, more abstract case, we study an SDE with linear drift component that displays generalized eigenvectors. While this is not a spatial SPDE in the strict sense, it is a useful example to explore the behaviour of different generalized eigenvectors within the same generalized eigenspace. Our example is
    \begin{align}  \label{eq:example_gen}
        \left\{ \begin{alignedat}{2}
            \text{d}u^{\text{g}}(t) &= \left( 
            \begin{pmatrix}
                p & 1 & 0 & 0 \\
                0 & p & 1 & 0 \\
                0 & 0 & p & 1 \\
                0 & 0 & 0 & p
            \end{pmatrix}
            u^{\text{g}}(t) + \xi_1(t) \right) \text{d}t , \\
            u^{\text{g}}(0) &= 0  \in\mathbb{R}^4 ,
        \end{alignedat} \right.
    \end{align}
for $u^{\text{g}}(t), \xi_1(t) \in\mathbb{R}^4 $ and $0<t<T$. In this setting, we consider the discrete spatial space so that $ \mathcal{H}_1=\mathbb{R}^4 $. In Figure \ref{Fig1_b}, we observe the time variance of the solution along the left generalized eigenvectors of the matrix that defines the drift component, or the generalized eigenvectors of its transpose. The only eigenvalue is $\lambda_1^{(p)}=p$ and its corresponding left generalized eigenvectors are
    \begin{align*}
        {e_{1,1}^{(p)}}^*=\begin{pmatrix}
            1 \\ -1 \\ 0 \\ 0
        \end{pmatrix}
        , \quad
        {e_{1,2}^{(p)}}^*=\begin{pmatrix}
            0 \\ 1 \\ -1 \\ 0
        \end{pmatrix}
        , \quad
        {e_{1,3}^{(p)}}^*=\begin{pmatrix}
            0 \\ 0 \\ 1 \\ -1
        \end{pmatrix}
        , \quad
        {e_{1,4}^{(p)}}^*=\begin{pmatrix}
            0 \\ 0 \\ 0 \\ 1
        \end{pmatrix} .
    \end{align*}
As described in Theorem \ref{thm:1}, the scaling laws in the limit $p\to 0^-$ of the time-asymptotic variances are $\Theta_p\left((-p)^{-1}\right)$ along ${e_{1,1}^{(p)}}^*$, $\Theta_p\left((-p)^{-3}\right)$ along ${e_{1,2}^{(p)}}^*$, $\Theta_p\left((-p)^{-5}\right)$ along ${e_{1,3}^{(p)}}^*$ and $\Theta_p\left((-p)^{-7}\right)$ along ${e_{1,4}^{(p)}}^*$. This behaviour is reflected in Figure \ref{Fig1_b} where only the time variance along the first mode is hyperbolic. Conversely, the rate of divergence in the limit $\kappa\to 0^+$ is $\Theta_\kappa\left(\kappa^{-1}\right)$ along each mode, as shown in Figure \ref{Fig2_b}.

\begin{figure}[h!]
    \centering   
    \subfloat[False EWS on \eqref{eq:example_disc} for $p=0.5$ in the limit $\kappa\to 0^+$.]{\begin{overpic}[width= 0.45\textwidth]{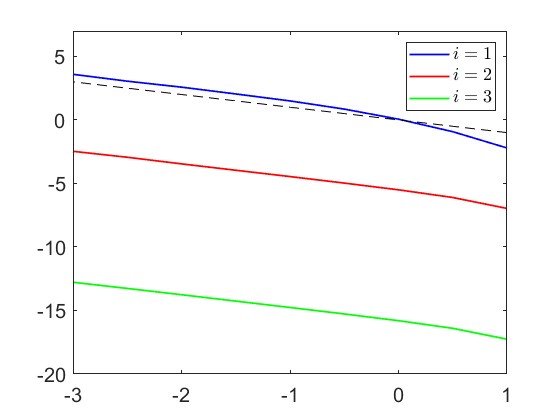}
    \put(470,0){\footnotesize{$\text{log}_{10}(\kappa)$}}
    \put(0,670){\rotatebox{270}{\parbox{15em}{\footnotesize{Variance along $e_i^{(-0.5)}$ on a $\text{log}_{10}$ scale}}}}
    \end{overpic}
    \label{Fig2_a}}
    \hspace{0mm}
    \subfloat[False EWS on \eqref{eq:example_gen} for $p=0.5$ in the limit $\kappa\to 0^+$.]{\begin{overpic}[width= 0.45\textwidth]{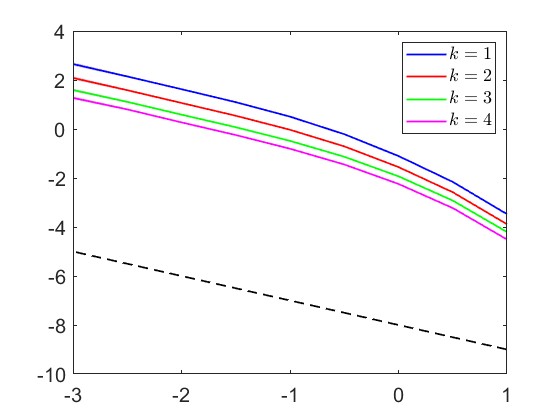}
    \put(470,0){\footnotesize{$\text{log}_{10}(\kappa)$}}
    \put(0,670){\rotatebox{270}{\parbox{15em}{\footnotesize{Variance along ${e_{1,k}^{(-0.5)}}^*$ on a $\text{log}_{10}$ scale}}}}
    \end{overpic}
    \label{Fig2_b}}

    \vspace{0mm}

    \subfloat[False EWS on \eqref{eq:example_cont} for $p=0.5$ in the limit $\kappa\to 0^+$.]{\begin{overpic}[width= 0.45\textwidth]{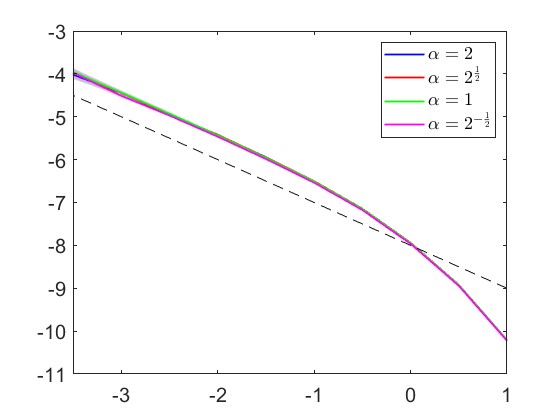}
    \put(470,0){\footnotesize{$\text{log}_{10}(\kappa)$}}
    \put(0,670){\rotatebox{270}{\parbox{15em}{\footnotesize{Variance along $g$ on a $\text{log}_{10}$ scale}}}}
    \end{overpic}
    \label{Fig2_c}}
    \hspace{0mm}
    \subfloat[False EWS on \eqref{eq:example_bound} for $p=0.5$ in the limit $\kappa\to 0^+$.]{\begin{overpic}[width= 0.45\textwidth]{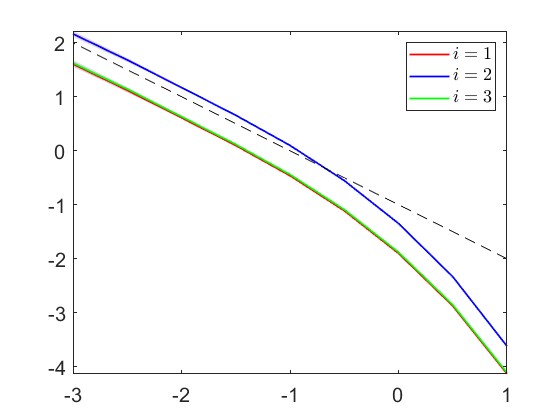}
    \put(470,0){\footnotesize{$\text{log}_{10}(\kappa)$}}
    \put(0,670){\rotatebox{270}{\parbox{15em}{\footnotesize{Variance along $\mathbbm{1}_{\mathcal{S}_i}$ on a $\text{log}_{10}$ scale}}}}
    \end{overpic}
    \label{Fig2_d}}
    
    \caption{Log-log plots of the variance in time obtained when projecting the SPDE solution along different modes. The limit $\kappa\to 0^-$ is shown from right to left. The subfigures $(a)-(d)$ correspond to the same systems as in Figure \ref{Fig1} and their values refer to the average of $10$ run samples. In contrast to the limit $p\to 0^-$, the variance exhibits hyperbolic divergence across all modes due to the noise structure. This is indicated by the alignment of all lines to the dashed black lines, which serve as a reference slope. The grey-shaded regions depict numerical uncertainties. The observed increase in variance, which depends on the increase in the noise correlation time $1/\kappa$, represents a false EWS in the context of CSD.}
    \label{Fig2}
\end{figure}

In Figure \ref{Fig1_c} and Figure \ref{Fig2_c}, we study the time variance of the numerical approximation of the solution associated to
    \begin{align} \label{eq:example_cont}
        \left\{ \begin{alignedat}{2}
            \text{d}u^{\text{c}}(x,t) &= \left( \left(-\lvert x \rvert^\alpha + p \right) u^{\text{c}}(x,t) + \sigma_{\text{R}} \xi_1(x,t) \right) \text{d}t , \\
            \text{d}\xi_1(x,t) &= -\kappa \xi_1(x,t) + \sigma Q_1^\frac{1}{2} \text{d}W_t^1 , \\
            u^{\text{c}}(x,0) &= 0 ,
        \end{alignedat} \right.
    \end{align}
for all $x\in\mathbb{R}$ and $0<t<T$. The spatial grid discretization is set as $\delta x = 10^{-5}$. In this case, we fix the function along which we project $u^{\text{c}}$, which is $g=\mathbbm{1}_{\mathcal{S}}$ for $\mathcal{S}=[-0.01,0.01]$. Instead, we consider different values of $\alpha$ that define the system, as $\alpha\in\left\{2^{-\frac{1}{2}},1,2^{\frac{1}{2}},2\right\}$. As described in Corollary \ref{cor:4}, the time-asymptotic variance of the solution along $g$ displays a rate of divergence less than hyperbolic in the limit $p\to 0^-$. In Figure \ref{Fig1_c}, the lines associated with $\alpha=2^{\frac{1}{2}}$ and $\alpha=2$ approach for small values of $-p$ their expected scaling law, which correspond to the slopes $-1+\frac{1}{\alpha}$ in the log-log scale. For the rest, the slope decreases steadily as $p$ approaches $0$, indicating lower rates of divergence. In Figure \ref{Fig2_c}, the scaling law is equivalent for each value of $\alpha$ and it is hyperbolic in the limit $\kappa\to 0^+$.

Lastly, in Figure \ref{Fig1_d} and Figure \ref{Fig2_d}, we consider the solution of the cable equation with boundary noise
    \begin{align} \label{eq:example_bound}
        \left\{ \begin{alignedat}{2}
            \text{d}u^{\text{b}}(x,t) &= \left( (\Delta + \pi^2 + p) u^{\text{b}}(x,t) \right) \text{d}t , \\
            u^{\text{b}}(0,t) &= \xi_0(0,t) ,\\
            u^{\text{b}}(1,t) &= \xi_0(1,t) ,\\
            u^{\text{b}}(x,0) &= 0 ,
        \end{alignedat} \right.
    \end{align}
for all $x\in[0,1]$ and $0<t<T$. The spatial step is fixed as $\delta x = 0.005$ and the numerical solution is obtained through the implicit Euler method. The space interval $\mathcal{X}_1=[0,1]$ is partitioned into $\mathcal{S}_1=\left[0,\frac{1}{3}\right]$, $\mathcal{S}_2=\left[\frac{1}{3},\frac{2}{3}\right]$ and $\mathcal{S}_3=\left[\frac{2}{3},1\right]$. In the figure, we observe the log-log plot of the time variance of the solution along $f=\mathbbm{1}_{\mathcal{S}_i}$, for $i\in\{1,2,3\}$. For such a system we know the Dirichlet map $D(p):\mathcal{H}_0\to\mathcal{H}_1$ explicitly \cite{da2014stochastic}. It does not depend on $p$ and, as such, satisfies the assumptions in Section \ref{sec: Prel}. This implies the statement of Corollary \ref{cor:6}, which is corroborated by the findings in the figures. In fact, for each $i\in\{1,2,3\}$ we consider $h_1^{(p)}=h_2^{(p)}=\mathbbm{1}_{\mathcal{S}_i}$ for any $p<0$. Then, the time-asymptotic variance along such families of functions displays hyperbolic divergence in the limit $p\to 0^-$, as shown in Figure \ref{Fig1_d}, and in the limit $\kappa\to 0^+$, as displayed in Figure \ref{Fig2_d}.

\section{Discussion and Conclusion}
We have derived expressions for system variance in linear SPDEs under the influence of red noise. The dependence of variance on a critical eigenvalue suggests that in such systems, variance diverges when linear stability is lost (Theorems \ref{thm:1}, \ref{thm:3}, and \ref{thm:5}). This is the case for generic probing functions in the solution space (Corollaries \ref{cor:2}, \ref{cor:4}, and \ref{cor:6}). In this sense, it is reasonable to expect the occurrence of CSD in bifurcating SPDEs with red noise. However, we have also found that a similar divergence takes place when the correlation time of the noise increases. This is problematic, as there is no way to tell the genuine source of an increase in variance in an application setting. The possibility of non-stationary noise characteristics would need to be carefully considered before applying CSD for the detection of approaching bifurcations. We have also discussed a second common EWS for bifurcations, an exponential increase in the autocorrelation. We have shown that such an effect indeed occurs with respect to some specific probing functions. However, also for this EWS, we emphasize the potential for false indications resulting from non-stationary noise.

We have performed numerical experiments for the same class of SPDEs. We introduced red noise either as a dynamic term or as a boundary condition. The expected divergences of variance corresponding to CSD could be reproduced in these experiments. The performed statistical assessment resembles the setting of an applied time series analysis in a real-world system suspected of bifurcating. Furthermore, we have reproduced the effect of an increase in variance as a response to an increase in the correlation time of the noise. The case of muted EWS, on the other hand, could occur when a system genuinely loses linear stability, but noise correlation speeds up simultaneously. The two opposing effects on the system variance can cancel, leading to a muted CSD signal.

In general, the analysis of CSD in time series data is only possible in case the dynamics of the system in question are well-understood \cite{Dakos2015CSDIndicatorsProblems, Kefi2013EWSforNonCatastrophic, Boerlijst2013SilentCollapse, Boettiger2013DiffBifEWS, Ben-Yami2024TippingTime, Morr2024InternalNoiseInterference}. For specific models of real-world systems, it may be possible to derive EWS, such as time series variance or other statistical quantities \cite{Benson2024alphaStableCSD, Morr2024RedNoiseCSD, Morr2024KramersMoyalEWS}. Without confirming that the phenomenon of CSD manifests in the specific modelling setting, its application can potentially fail. In this work, we have added a modelling setting to the list of confirmed occurrences of CSD. This is the bifurcating SPDE setting with red noise. This class of models carries relevance in the analysis of systems from, e.g., the field of climate or ecology \cite{Hasselmann1976Theory1, Zwanzig1961Memory, Vasseur2004ColorEnvironmentalNoise}. The employment of methodology based on CSD is thus mathematically motivated for yet a larger range of real-world systems.


\newpage
\bibliographystyle{abbrv}
\bibliography{Ref}

\begin{thebibliography}{10}

\bibitem{Alkhayuon2019AMOCRateTipping}
H.~Alkhayuon, P.~Ashwin, L.~C. Jackson, C.~Quinn, and R.~A. Wood.
\newblock Basin bifurcations, oscillatory instability and rate-induced thresholds for {{Atlantic}} meridional overturning circulation in a global oceanic box model.
\newblock {\em Proc. R. Soc. A: Math. Phys. Eng. Sci.}, 475(2225), 2019.

\bibitem{ArmstrongMcKay2022TippingPoints}
D.~I. Armstrong~McKay, A.~Staal, J.~F. Abrams, R.~Winkelmann, B.~Sakschewski, S.~Loriani, I.~Fetzer, S.~E. Cornell, J.~Rockström, and T.~M. Lenton.
\newblock Exceeding 1.5°{{C}} global warming could trigger multiple climate tipping points.
\newblock {\em Science}, 377(6611):eabn7950, 2022.

\bibitem{Baars2017SPDEBoundary}
S.~Baars, J.~Viebahn, T.~Mulder, C.~Kuehn, F.~Wubs, and H.~Dijkstra.
\newblock Continuation of probability density functions using a generalized {{Lyapunov}} approach.
\newblock {\em J. Comput. Phys.}, 336:627--643, 2017.

\bibitem{Ben-Yami2024TippingTime}
M.~{Ben-Yami}, A.~Morr, S.~Bathiany, and N.~Boers.
\newblock Uncertainties too large to predict tipping times of major {{Earth}} system components from historical data.
\newblock {\em Sci. Adv.}, 10(31):eadl4841, 2024.

\bibitem{Benson2024alphaStableCSD}
V.~Benson, J.~F. Donges, N.~Boers, M.~Hirota, A.~Morr, A.~Staal, J.~Vollmer, and N.~Wunderling.
\newblock Measuring tropical rainforest resilience under non-{{Gaussian}} disturbances.
\newblock {\em Environ. Res. Lett.}, 19(2), 2024.

\bibitem{Bernuzzi2024EWSSPDEBoundary}
P.~Bernuzzi, H.~Dijkstra, and C.~Kuehn.
\newblock Warning signs for boundary noise and their application to an ocean {{Boussinesq}} model.
\newblock {\em Phys. D: Nonlinear Phenom.}, 470:134391, 2024.

\bibitem{Bernuzzi2024EWSSPDEContinuousSpectrum}
P.~Bernuzzi, A.~F.~S. Düx, and C.~Kuehn.
\newblock Early warning signs for {{SPDEs}} with continuous spectrum.
\newblock {\em Eur. J. Appl. Math.}, pages 1--40, 2024.

\bibitem{Bernuzzi2024Large}
P.~Bernuzzi and T.~Grafke.
\newblock Large deviation minimisers for stochastic partial differential equations with degenerate noise.
\newblock {\em arXiv preprint arXiv:2409.17839}, 2024.

\bibitem{bernuzzi2023bifurcations}
P.~Bernuzzi and C.~Kuehn.
\newblock Bifurcations and early-warning signs for spdes with spatial heterogeneity.
\newblock {\em J. Dyn. Differ. Equ.}, pages 1--45, 2023.

\bibitem{Boerlijst2013SilentCollapse}
M.~C. Boerlijst, T.~Oudman, and A.~M. {de Roos}.
\newblock Catastrophic collapse can occur without early warning: {{Examples}} of silent catastrophes in structured ecological models.
\newblock {\em PLOS ONE}, 8(4):1--6, 2013.

\bibitem{Boers2018DOEWS}
N.~Boers.
\newblock Early-warning signals for {{Dansgaard-Oeschger}} events in a high-resolution ice core record.
\newblock {\em Nat. Commun.}, 9(1):2556, 2018.

\bibitem{Boers2017DeforAmaz}
N.~Boers, N.~Marwan, H.~M.~J. Barbosa, and J.~Kurths.
\newblock A deforestation-induced tipping point for the {{South American}} monsoon system.
\newblock {\em Sci. Rep.}, 7(1), 2017.

\bibitem{Boettiger2013DiffBifEWS}
C.~Boettiger, N.~Ross, and A.~Hastings.
\newblock Early warning signals: The charted and uncharted territories.
\newblock {\em Theor. Ecol.}, 6(3):255--264, 2013/08/01, 2013.

\bibitem{bray2002theory}
A.~J. Bray.
\newblock Theory of phase-ordering kinetics.
\newblock {\em Adv. Phys.}, 51(2):481--587, 2002.

\bibitem{Brett2020ROCEWS}
T.~Brett, M.~Ajelli, Q.-H. Liu, M.~G. Krauland, J.~J. Grefenstette, W.~G. {van Panhuis}, A.~Vespignani, J.~M. Drake, and P.~Rohani.
\newblock Detecting critical slowing down in high-dimensional epidemiological systems.
\newblock {\em PLOS Comput. Biol.}, 16(3):1--19, 2020.

\bibitem{burke2007homoclinic}
J.~Burke and E.~Knobloch.
\newblock Homoclinic snaking: structure and stability.
\newblock {\em Chaos}, 17(3), 2007.

\bibitem{burke2007snakes}
J.~Burke and E.~Knobloch.
\newblock Snakes and ladders: localized states in the swift--hohenberg equation.
\newblock {\em Phys. Rev. A}, 360(6):681--688, 2007.

\bibitem{Clarke2023ROSA}
J.~J. Clarke, C.~Huntingford, P.~D.~L. Ritchie, and P.~M. Cox.
\newblock Seeking more robust early warning signals for climate tipping points: The ratio of spectra method ({{ROSA}}).
\newblock {\em Environ. Res. Lett.}, 18(3), 2023.

\bibitem{da1993evolution}
G.~Da~Prato.
\newblock Evolution equations with white-noise boundary conditions.
\newblock {\em Stochastics}, 42(3-4):167--182, 1993.

\bibitem{da2014stochastic}
G.~Da~Prato and J.~Zabczyk.
\newblock {\em Stochastic equations in infinite dimensions}, volume 152.
\newblock Cambridge university press, 2014.

\bibitem{Dakos2015CSDIndicatorsProblems}
V.~Dakos, S.~R. Carpenter, E.~H. Van~Nes, and M.~Scheffer.
\newblock Resilience indicators: Prospects and limitations for early warnings of regime shifts.
\newblock {\em Philos. Trans. R. Soc. B}, 370(1659):20130263, 2015.

\bibitem{Dijkstra1997AMOCPDE}
H.~A. Dijkstra and M.~J. Molemaker.
\newblock Symmetry breaking and overturning oscillations in thermohaline-driven flows.
\newblock {\em J. Fluid Mech.}, 331:169--198, 1997.

\bibitem{faris1982large}
W.~G. Faris and G.~Jona-Lasinio.
\newblock Large fluctuations for a nonlinear heat equation with noise.
\newblock {\em J. Phys. A Math. Theor.}, 15(10):3025, 1982.

\bibitem{Gowda2015EWSSPDE}
K.~Gowda and C.~Kuehn.
\newblock Early-warning signs for pattern-formation in stochastic partial differential equations.
\newblock {\em Commun. Nonlinear Sci. Numer. Simul.}, 22(1):55--69, 2015.

\bibitem{hariz2019swift}
A.~Hariz, L.~Bahloul, L.~Cherbi, K.~Panajotov, M.~Clerc, M.~Ferr{\'e}, B.~Kostet, E.~Averlant, and M.~Tlidi.
\newblock Swift-hohenberg equation with third-order dispersion for optical fiber resonators.
\newblock {\em Phys. Rev. A}, 100(2):023816, 2019.

\bibitem{Hasselmann1976Theory1}
K.~Hasselmann.
\newblock Stochastic climate models {{Part I}}. {{Theory}}.
\newblock {\em Tellus}, 28(6):473--485, 1976.

\bibitem{hogele2011metastability}
M.~A. H{\"o}gele.
\newblock {\em Metastability of the Chafee-Infante Equation with small heavy-tailed L{\'e}vy Noise}.
\newblock Humboldt-Universit{\"a}t zu Berlin, Mathematisch-Naturwissenschaftliche Fakult{\"a}t II, 2011.

\bibitem{Hanggi1994ColoredNoiseDynamicalSys}
P.~Hänggi and P.~Jung.
\newblock Colored noise in dynamical systems.
\newblock In {\em Advances in Chemical Physics}. John Wiley \& Sons, Ltd, 1994.

\bibitem{Hanggi1993RedNoiseResonance}
P.~Hänggi, P.~Jung, C.~Zerbe, and F.~Moss.
\newblock Can colored noise improve stochastic resonance?
\newblock {\em J. Stat. Phys.}, 70(1):25--47, 1993/01/01, 1993.

\bibitem{kao2014spatial}
H.-C. Kao, C.~Beaume, and E.~Knobloch.
\newblock Spatial localization in heterogeneous systems.
\newblock {\em Phys. Rev. E}, 89(1):012903, 2014.

\bibitem{KOCH1984CableTheoryNeurons}
C.~Koch.
\newblock Cable theory in neurons with active, linearized membranes.
\newblock {\em Biol. Cybern.}, 50(1):15--33, 1984.

\bibitem{Kuehn2011CSD}
C.~Kuehn.
\newblock A mathematical framework for critical transitions: {{Bifurcations}}, fast–slow systems and stochastic dynamics.
\newblock {\em Phys. D: Nonlinear Phenom.}, 240(12):1020--1035, 2011.

\bibitem{Kuehn2022ColourBlind}
C.~Kuehn, K.~Lux, and A.~Neamţu.
\newblock Warning signs for non-{{Markovian}} bifurcations: Colour blindness and scaling laws.
\newblock {\em Proc. R. Soc. A: Math. Phys. Eng. Sci.}, 478(2259), 2022.

\bibitem{Romano2019ScalLawsAndEWSs}
C.~Kuehn and F.~Romano.
\newblock Scaling laws and warning signs for bifurcations of spdes.
\newblock {\em European J. Appl. Math.}, 30(5):853--868, 2019.

\bibitem{Kefi2013EWSforNonCatastrophic}
S.~Kéfi, V.~Dakos, M.~Scheffer, E.~H. Van~Nes, and M.~Rietkerk.
\newblock Early warning signals also precede non-catastrophic transitions.
\newblock {\em Oikos (Copenhagen, Denmark)}, 122(5):641--648, 2013.

\bibitem{lega1994swift}
J.~Lega, J.~Moloney, and A.~Newell.
\newblock Swift-hohenberg equation for lasers.
\newblock {\em Phys. Rev. Lett.}, 73(22):2978, 1994.

\bibitem{Lenton2008}
T.~Lenton, H.~Held, E.~Kriegler, J.~W. Hall, W.~Lucht, S.~Rahmstorf, and H.~J. Schellnhuber.
\newblock Tipping elements in the {{Earth}}'s climate system.
\newblock {\em Proc. Natl. Acad. Sci. U.S.A.}, 105(06):1786--1793, 2008.

\bibitem{Liu2023RedNoiseOcean}
Z.~Liu, P.~Gu, and T.~L. Delworth.
\newblock Strong red noise ocean forcing on atlantic multidecadal variability assessed from surface heat flux: {{Theory}} and application.
\newblock {\em J. Clim.}, 36(1):55--80, 2023.

\bibitem{Lohmann2024AMOCMultistability}
J.~Lohmann, H.~A. Dijkstra, M.~Jochum, V.~Lucarini, and P.~D. Ditlevsen.
\newblock Multistability and intermediate tipping of the {{Atlantic Ocean}} circulation.
\newblock {\em Sci. Adv.}, 10(12), 2024.

\bibitem{Melbourne2011ChaosToNoise}
I.~Melbourne and A.~M. Stuart.
\newblock A note on diffusion limits of chaotic skew-product flows.
\newblock {\em Nonlinearity}, 24(4):1361--1367, 2011.

\bibitem{Miguel1981MultiplicativeRedNoiseEst}
M.~S. Miguel and J.~M. Sancho.
\newblock Multiplicative ornstein uhlenbeck noise in nonequilibrium phenomena.
\newblock In L.~Arnold and R.~Lefever, editors, {\em Stochastic Nonlinear Systems in Physics, Chemistry, and Biology}, pages 137--150. Springer Berlin Heidelberg, 1981.

\bibitem{Morr2024RedNoiseCSD}
A.~Morr and N.~Boers.
\newblock Detection of {{Approaching Critical Transitions}} in {{Natural Systems Driven}} by {{Red Noise}}.
\newblock {\em Phys. Rev. X}, 14(2), 2024.

\bibitem{Morr2024InternalNoiseInterference}
A.~Morr, N.~Boers, and P.~Ashwin.
\newblock Internal {{Noise Interference}} to {{Warnings}} of {{Tipping Points}} in {{Generic Multidimensional Dynamical Systems}}.
\newblock {\em SIAM J. Appl. Dyn. Syst.}, 23(4):2793--2806, 2024.

\bibitem{Morr2022RedNoise}
A.~Morr, D.~Kreher, and N.~Boers.
\newblock Red noise in continuous-time stochastic modelling, 2022.

\bibitem{Morr2024KramersMoyalEWS}
A.~Morr, K.~Riechers, L.~R. Gorjão, and N.~Boers.
\newblock Anticipating critical transitions in multidimensional systems driven by time- and state-dependent noise.
\newblock {\em Phys. Rev. Res.}, 6(3):033251, 2024.

\bibitem{Newman1997RedNoiseExtratropicalFlow}
M.~Newman, P.~D. Sardeshmukh, and C.~Penland.
\newblock Stochastic forcing of the wintertime extratropical flow.
\newblock {\em J. Atmos. Sci.}, 54(3):435--455, 1997.

\bibitem{Pal2022TippingSpatialRedNoise}
K.~Pal, S.~Deb, and P.~S. Dutta.
\newblock Tipping points in spatial ecosystems driven by short-range correlated noise.
\newblock {\em Phys. Rev. E}, 106(5):054412, 2022.

\bibitem{Santos2019RedNoiseElectronics}
E.~Santos, M.~Khosravy, M.~A.~A. Lima, A.~S. Cerqueira, C.~A. Duque, and A.~Yona.
\newblock High accuracy power quality evaluation under a colored noisy condition by filter bank {{ESPRIT}}.
\newblock {\em Electronicsweek}, 8(11), 2019.

\bibitem{Valdes2011BuiltStability}
P.~Valdes.
\newblock Built for stability.
\newblock {\em Nat. Geosci.}, 4(7):414--416, 2011.

\bibitem{Vasseur2004ColorEnvironmentalNoise}
D.~A. Vasseur and P.~Yodzis.
\newblock {{The Color of Environmental Noise}}.
\newblock {\em Ecology}, 85(4):1146--1152, 2004.

\bibitem{Vaughan2005RedNoiseAstro}
{Vaughan, S.}
\newblock A simple test for periodic signals in red noise.
\newblock {\em Antike Abendl.}, 431(1):391--403, 2005.

\bibitem{WANG2018ModifiedCableEquation}
B.~Wang, A.~S. Aberra, W.~M. Grill, and A.~V. Peterchev.
\newblock Modified cable equation incorporating transverse polarization of neuronal membranes for accurate coupling of electric fields.
\newblock {\em J. Neural Eng.}, 15(2):026003, 2018.

\bibitem{zhang2001completeness}
L.~Zhang.
\newblock The completeness of generalized eigenfunctions of a discrete operator.
\newblock {\em J. Math. Anal. Appl.}, 261(1):241--253, 2001.

\bibitem{Zwanzig1961Memory}
R.~Zwanzig.
\newblock Memory effects in irreversible thermodynamics.
\newblock {\em Phys. Rev.}, 124(4):983--992, 1961.

\bibitem{Zwanzig2001Formalism}
R.~Zwanzig.
\newblock {\em Nonequilibrium Statistical Mechanics}.
\newblock Oxford University Press, 2001.

\end{thebibliography}

\newpage
\appendix
    
\end{document}